\newcommand{\RT}{\mathrm{RT}}
\newcommand{\URT}{\mathrm{URT}}
\newcommand{\ART}{\mathrm{ART}}
\renewcommand{\tt}[1]{\textnormal{\texttt{#1}}}
\newcommand{\rev}[1]{{#1}^{R}}
\newtheorem{theorem}{Theorem}
\newtheorem{Lemma}[theorem]{Lemma}
\theoremstyle{definition}
\newtheorem{Conjecture}[theorem]{Conjecture}
\newtheorem{question}[theorem]{Question}
\begin{document}
\title{The undirected repetition threshold and undirected pattern avoidance}
\author{James D.~Currie\thanks
{Supported by the Natural Sciences and Engineering Research Council of Canada (NSERC), [funding reference number 2017-03901].} and Lucas Mol\\
\\
{\normalsize The University of Winnipeg}\\
{\normalsize Winnipeg, Manitoba, Canada R3B 2E9}\\
{\normalsize \{j.currie,l.mol\}@uwinnipeg.ca}}

\date{}

\maketitle   

\begin{abstract}
\noindent
For a rational number $r$ such that $1<r\leq 2$, an \emph{undirected $r$-power} is a word of the form $xyx'$, where the word $x$ is nonempty, the word $x'$ is in $\{x,\rev{x}\}$, and we have $|xyx'|/|xy|=r$.  The \emph{undirected repetition threshold} for $k$ letters, denoted $\URT(k)$, is the infimum of the set of all $r$ such that undirected $r$-powers are avoidable on $k$ letters.  We first demonstrate that $\URT(3)=\tfrac{7}{4}$.  Then we show that $\URT(k)\geq \tfrac{k-1}{k-2}$ for all $k\geq 4$.  We conjecture that $\URT(k)=\tfrac{k-1}{k-2}$ for all $k\geq 4$, and we confirm this conjecture for $k\in\{4,5,\ldots,21\}.$  We then consider related problems in pattern avoidance; in particular, we find the undirected avoidability index of every binary pattern.  This is an extended version of a paper presented at WORDS 2019, and it contains new and improved results.

\noindent
\textbf{MSC 2010:} 68R15

\noindent
\textbf{Keywords:} Repetition thresholds, Gapped repeats, Gapped palindromes, Pattern avoidance, Patterns with reversal
\end{abstract}

\section{Introduction}\label{Intro}

A \emph{square} is a word of the form $xx$, where $x$ is a nonempty word.  An \emph{Abelian square} is a word of the form $x\tilde{x}$, where $\tilde{x}$ is an anagram (or permutation) of $x$.  
% It is well-known that there is an infinite ternary word with no squares as factors~\cite{Berstel1995}, and an infinite $4$-ary word with no Abelian squares as factors~\cite{Keranen1992}.  
The notions of square and Abelian square can be extended to fractional powers in a natural way.  Let $1<r\leq 2$ be a rational number.  An \emph{(ordinary) $r$-power} is a word of the form $xyx$, where $x$ is a nonempty word, and $|xyx|/|xy|=r$.  An \emph{Abelian $r$-power} is a word of the form $xy\tilde{x}$, where $x$ is a nonempty word, the word $\tilde{x}$ is an anagram of $x$, and $|xy\tilde{x}|/|xy|=r$.  Here, we use the definition of Abelian $r$-power given by Cassaigne and Currie~\cite{CassaigneCurrie1999}.  We note that several distinct definitions exist (see~\cite{SamsonovShur2012,Fici2016}, for example).

In general, if $\sim$ is an equivalence relation on words that respects length (i.e., we have $|x|=|x'|$ whenever $x\sim x'$), then an \emph{$r$-power up to $\sim$} is a word of the form $xyx'$, where $x$ is a nonempty word, and we have both $x'\sim x$ and $|xyx'|/|xy|=r$. 
%\footnote{Note that we have only defined $r$-powers up to $\sim$ for rational $1<r\leq 2$.  While the definition of ordinary $r$-powers for $r\geq 2$ is standard, the natural definition of $r$-powers up to $\sim$ for $r> 2$ is not entirely clear.}  
The notion of $r$-power up to $\sim$ generalizes ordinary $r$-powers and Abelian $r$-powers, where the equivalence relations are equality and ``is an anagram of'', respectively.

Let $\sim$ be an equivalence relation on words that respects length.  For a real number $1<\alpha\leq 2$, a word $w$ is called \emph{$\alpha$-free up to $\sim$} if no factor of $w$ is an $r$-power up to $\sim$ for $r\geq \alpha$.  The word $w$ is called \emph{$\alpha^+$-free up to $\sim$} if no factor of $w$ is an $r$-power up to $\sim$ for $r>\alpha$.  For every integer $k\geq 2$, we say that $\alpha$-powers up to $\sim$ are \emph{$k$-avoidable} if there is an infinite word on $k$ letters that is $\alpha$-free up to $\sim$, and \emph{$k$-unavoidable} otherwise.  For every integer $k\geq 2$, the \emph{repetition threshold up to $\sim$} for $k$ letters, denoted $\RT_\sim(k)$, is defined as
\[
\RT_\sim(k)=\inf\{r\colon\ \mbox{$r$-powers up to $\sim$ are $k$-avoidable}\}.
\]
Since we have only defined $r$-powers up to $\sim$ for $1<r\leq 2$, it follows that $\RT_\sim(k)\leq 2$ or $\RT_\sim(k)=\infty$ for any particular value of $k$.

It is well-known that squares are $3$-avoidable~\cite{Berstel1995}.  Thus, for $k\geq 3$, we have that $RT_{=}(k)$ is the usual \emph{repetition threshold}, denoted simply $\RT(k)$.  Dejean~\cite{Dejean1972} proved that $\RT(3)=7/4$, and conjectured that $\RT(4)=7/5$ and $\RT(k)=k/(k-1)$ for all $k\geq 5$.  This conjecture has been confirmed through the work of many authors~\cite{CurrieRampersad2011,Rao2011,Pansiot1984,Dejean1972,Carpi2007,MoulinOllagnier1992,CurrieRampersad2009Again,Noori2007}.

Let $\approx$ denote the equivalence relation ``is an anagram of''.  It is well-known that Abelian squares are $4$-avoidable~\cite{Keranen1992}.  Thus, for all $k\geq 4$, we see that $\RT_\approx(k)$ is equal to the \emph{Abelian repetition threshold} (or \emph{commutative repetition threshold}) for $k$ letters, introduced by Cassaigne and Currie~\cite{CassaigneCurrie1999}, and denoted $\ART(k)$.  Relatively less is known about the Abelian repetition threshold.  Cassaigne and Currie~\cite{CassaigneCurrie1999} give (weak) upper bounds on $\ART(k)$ in demonstrating that $\lim_{k\rightarrow\infty}\ART(k)=1$. Samsonov and Shur~\cite{SamsonovShur2012} conjecture that
$\ART(4)=9/5$ and $\ART(k)=(k-2)/(k-3)$ for all $k\geq 5$,
and give a lower bound matching this conjecture.  (Note that Samsonov and Shur define weak, semi-strong, and strong Abelian $\alpha$-powers for all real numbers $\alpha>1$.  For any rational number $r\in(1,2]$, their definitions of semi-strong Abelian $r$-power and strong Abelian $r$-power are both equivalent to our definition of Abelian $r$-power.)

For every word $x=x_1x_2\cdots x_n$, where the $x_i$ are letters, we let $\rev{x}$ denote the \emph{reversal of $x$}, defined by $\rev{x}=x_n\cdots x_2x_1$.  For example, if $x=\tt{time}$ then $\rev{x}=\tt{emit}$.  Let $\simeq$ be the equivalence relation on words defined by $x\simeq x'$ if $x'\in\{x,\rev{x}\}$.  In this article, we focus on determining $\RT_{\simeq}(k)$. We simplify our notation and terminology as follows.  We refer to $r$-powers up to $\simeq$ as \emph{undirected $r$-powers}.  These come in two types: words of the form $xyx$ are ordinary $r$-powers, while we refer to words of the from $xyx^R$ as \emph{reverse $r$-powers}.  
For example, the English words \tt{edited} and \tt{render} are undirected $\tfrac{3}{2}$-powers; \tt{edited} is an ordinary $\tfrac{3}{2}$-power, while \tt{render} is a reverse $\tfrac{3}{2}$-power.  We say that a word $w$ is \emph{undirected $\alpha$-free} if it is $\alpha$-free up to $\simeq$.  The definition of an \emph{undirected $\alpha^+$-free} word is analogous.  We let $\URT(k)=\RT_{\simeq}(k)$, and refer to this as the \emph{undirected repetition threshold} for $k$ letters.

It is clear that $\simeq$ is coarser than $=$ and finer than $\approx$.  Thus, for every rational $1<r\leq 2$, an $r$-power is an undirected $r$-power, and an undirected $r$-power is an Abelian $r$-power.  As a result, we immediately have
\[
\RT(k)\leq \URT(k)\leq \ART(k)
\]
for all $k\geq 2$.

We now describe the layout of the remainder of the article.  In Section~\ref{URT3}, we show that $\URT(3)=7/4$ using a standard morphic constuction.  In Section~\ref{Backtrack}, we demonstrate that $\URT(k)\geq (k-1)/(k-2)$ for all $k\geq 4$.  In Section~\ref{URT4}, we use a variation of the encoding introduced by Pansiot~\cite{Pansiot1984} to prove that $\URT(k)=(k-1)/(k-2)$ for $k\in\{4,5,\ldots,21\}$.  In Section~\ref{Patterns}, we consider some related problems in pattern avoidance.  In particular, we find the ``undirected avoidability index'' of every binary pattern.

In light of our results on the undirected repetition threshold, we propose the following conjecture.

\begin{Conjecture}\label{conj}
For every $k\geq 4$, we have $\URT(k)=(k-1)/(k-2)$.
\end{Conjecture}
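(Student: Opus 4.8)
Since the lower bound $\URT(k)\geq(k-1)/(k-2)$ is already established in Section~\ref{Backtrack}, a proof of the conjecture reduces to the matching upper bound: for every $k\geq 4$, exhibit an infinite word over a $k$-letter alphabet that is undirected $\left(\frac{k-1}{k-2}\right)^+$-free. For $k\in\{4,5,\ldots,21\}$ this is done in Section~\ref{URT4}, so the real content is a construction that is uniform in $k$. The plan is to follow the template that resolved Dejean's conjecture, adapted so as to also forbid reverse powers. Note first that the ordinary-power part of the constraint is \emph{weaker} than Dejean's, since $\frac{k-1}{k-2}>\frac{k}{k-1}=\RT(k)$; the genuinely new obstruction is the avoidance of reverse powers $xy\rev{x}$ of exponent just above $\frac{k-1}{k-2}$, and everything hinges on controlling these.

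The first step is a finiteness reduction. For a threshold of the shape $n/(n-1)$ (here $n=k-1$) there is a standard lemma, in the style of Cassaigne and of Ochem, stating that a word avoiding ordinary $\left(\frac{n}{n-1}\right)^+$-powers on its sufficiently short factors avoids them everywhere, with an explicit bound on ``short.'' I would first prove the undirected analogue: if $xyx'$ with $x'\in\{x,\rev{x}\}$ is an undirected $r$-power with $r>\frac{n}{n-1}$ and period $|xy|$ larger than an explicit bound depending only on $k$, then it contains a strictly shorter undirected $r'$-power with $r'>\frac{n}{n-1}$. For ordinary powers this is the classical pumping argument on the period; for reverse powers one runs the same argument, using that a long $x$ overlapping its own reversal forces, via Fine and Wilf, a short period inside $x$ and hence a shorter repetition of the same type. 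This reduces the infinitely many forbidden configurations to a finite, computable check. The second step is the construction itself: working in a Pansiot-type encoding as in Section~\ref{URT4}, where the low exponent forces the word to be built locally out of near-permutations of the alphabet, one seeks a single morphism (or finite automaton) $g_k$, uniform in $k$, whose fixed point encodes the desired word, certified by the finite check of the first step. This is exactly the Moulin~Ollagnier / Currie--Rampersad route for mid-range Dejean, while a Carpi-type kernel construction would aim to dispatch all large $k$ simultaneously.

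The main obstacle is precisely the reverse powers under desubstitution. In the Dejean arguments, a long ordinary power in $g_k(w)$ is pulled back to an ordinary power in $w$ using synchronization of $g_k$. A long reverse power $xy\rev{x}$ in $g_k(w)$, however, pulls back only if $g_k$ is compatible with reversal — roughly $g_k(\rev{a})=\rev{g_k(a)}$ up to boundary letters — which generic Dejean-type morphisms are not. So the hard part is to design the encoding and the morphism so that reverse powers in the image either are short (hence caught by the finite check) or are structurally impossible; a natural device is to have each encoded letter carry an orientation bit, so that a reverse power would be forced to flip orientation and thereby be detected. Making such an encoding simultaneously avoid ordinary powers at the Dejean-plus rate $\frac{k-1}{k-2}$ and remain uniform in $k$ is, as far as we can see, the crux, and it is the reason the conjecture remains open beyond $k=21$ even though the lower bound and the small cases are settled.
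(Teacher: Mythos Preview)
The statement is a conjecture, and the paper does not prove it: the lower bound is established for all $k\geq 4$, the matching upper bound only for $k\in\{4,\ldots,21\}$, and the Conclusion explicitly flags $k\geq 22$ as open. Your write-up acknowledges this, so there is no proof in the paper to compare against---only the strategy the paper uses for the cases it does settle.

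On that strategy, your outline diverges from the paper in the one place that matters, the handling of reverse powers. You propose either (i) a Fine--Wilf reduction (``a long $x$ overlapping its own reversal forces a short period'') or (ii) engineering the morphism to be reversal-compatible via an orientation bit. The paper does neither. Its mechanism is Lemma~\ref{ReversibleFactors}: if the ternary Pansiot encoding avoids the factors $\tt{312}$ and $\tt{322}$, then any factor of $\bm{w}_k$ whose encoding contains $\tt{1231}$ cannot have its reversal also occur in $\bm{w}_k$. Since $\tt{1231}$ recurs in every window of some bounded length $N$ in the encodings $g_k(f_k^\omega(\tt{1}))$, \emph{every} reversible factor of $\bm{w}_k$ has length at most $N+k$, and hence every reverse $r$-power with $r>\tfrac{k-1}{k-2}$ is short enough to be caught by a single finite check of a prefix. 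No desubstitution of reverse powers is ever performed. Your Fine--Wilf step is also not right as stated: in $xy\rev{x}$ the blocks $x$ and $\rev{x}$ are separated by $y$ and do not overlap, and an overlap of a word with its reversal yields palindromic structure rather than a short period, so the promised ``strictly shorter undirected power'' does not follow.

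What would actually be needed to close the conjecture along the paper's lines is to make the constants in Lemma~\ref{ReversibleFactors} and in the kernel-repetition estimate uniform in $k$, or to find a single Carpi-style construction for all large $k$ whose encoding still avoids $\tt{312}$ and $\tt{322}$ and has bounded $\tt{1231}$-gaps. Your sketch does not supply this, and neither does the paper.
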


We note that words of the form $xyx$ are sometimes referred to as \emph{gapped repeats}, and that words of the form $xy\rev{x}$ are sometimes referred to as \emph{gapped palindromes}.  In particular, an ordinary (reverse, respectively) $r$-power satisfying $r\geq 1+1/\alpha$ is called an \emph{$\alpha$-gapped repeat} (\emph{$\alpha$-gapped palindrome}, respectively).  From this perspective, the undirected repetition threshold is a measure of how large we can make the ``gaps'' of the gapped repeats and gapped palindromes in an infinite word over an alphabet of size $k$.  Algorithmic questions concerning the identification and enumeration of $\alpha$-gapped repeats and palindromes in a given word, along with some related questions, have recently received considerable attention; see~\cite{IKoppl2019, DuchonNicaudPivoteau2018, GawrychowskiManea2015, CrochemoreKolpakovKucherov2016} and the references therein.  Gapped repeats and palindromes are important in the context of DNA and RNA structures, and this has been the primary motivation for their study.

We now introduce some notation and terminology that will be used in the sequel.  For every integer $k\geq 2$, we let $\Sigma_k$ denote the alphabet $\{\tt{1},\tt{2},\dots,\tt{k}\}$.  Let $A$ and $B$ be alphabets, and let $h\colon A^*\rightarrow B^*$ be a morphism.  Using the standard notation for images of sets, we have $h(A)=\{h(a)\colon\ a\in A\},$ which we refer to as the set of \emph{blocks} of $h$.  
A set of words $P\subseteq A^*$ is called a \emph{prefix code} if no element of $P$ is a prefix of another.
If $P$ is a prefix code and $w$ is a nonempty factor of some element of $P^+$, a \emph{cut} of $w$ over $P$ is a pair $(x,y)$ such that (i) $w=xy$; and (ii) for every pair of words $p,s$ with $pws\in P^+,$ we have $px\in P^*$.  (Note that it suffices to check condition (ii) for every pair of words $p,s$ where $p$ is a prefix of a block and $s$ is a suffix of a block.)
We use vertical bars to denote cuts.  For example, over the prefix code $\{\tt{12},\tt{21}\},$ the word $\tt{11}$ has cut $\tt{1}\vert \tt{1}$.  
The prefix code that we work over will always be the set of blocks of a given morphism, and should be clear from context if it is not explicitly stated.

\section{\texorpdfstring{\boldmath{$\URT(3)=\tfrac{7}{4}$}}{URT(3)=7/4}}\label{URT3}

Dejean~\cite{Dejean1972} demonstrated that $\RT(3)=7/4$, and hence we must have $\URT(3)\geq 7/4$.  In order to show that $\URT(3)=7/4,$ it suffices to find an infinite ternary word that is undirected $\tfrac{7}{4}^+$-free.  We provide a morphic construction of such a word.  Let $f:\Sigma_3^*\rightarrow \Sigma_3^*$ be the $24$-uniform morphism defined by
\begin{align*}
    \tt{1}&\mapsto \tt{123\,132\,312\,132\,123\,213\,231\,321}\\
    \tt{2}&\mapsto \tt{231\,213\,123\,213\,231\,321\,312\,132}\\
    \tt{3}&\mapsto \tt{312\,321\,231\,321\,312\,132\,123\,213}.
\end{align*}
The morphism $f$ is similar in structure to the morphism of Dejean~\cite{Dejean1972} whose fixed point avoids ordinary $7/4^+$-powers (but not undirected $7/4^+$-powers).  Note, in particular, that $f$ is ``symmetric'' as defined by Frid~\cite{Frid2001}. 

The following theorem was also verified by one of the anonymous reviewers of the conference version of this paper using the automatic theorem proving software \tt{Walnut}~\cite{Walnut}.

\begin{theorem}\label{thm:URT3}
The word $f^\omega(\tt{1})$ is undirected $\tfrac{7}{4}^+$-free.
\end{theorem}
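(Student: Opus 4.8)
The plan is to verify that $f^\omega(\tt{1})$ is undirected $\tfrac{7}{4}^+$-free by the standard technique for fixed points of uniform morphisms: bound the period of any potential undirected power by comparing it with the block length $24$, reduce to finitely many cases, and check those cases by finite computation (over the image alphabet, and over preimages). Let $w=f^\omega(\tt{1})$ and suppose for contradiction that $w$ contains a factor $u=xyx'$ with $x'\in\{x,\rev{x}\}$ and $|u|/|xy|=r>\tfrac{7}{4}$; write $p=|xy|$ for the period (in the ordinary case) or the ``mirror period'' (in the reverse case), and $e=|x|$ for the length of the repeated part, so $r=(p+e)/p$ and $r>\tfrac74$ means $e>\tfrac{3}{4}p$, i.e.\ $e\ge \lceil \tfrac{3}{4}p\rceil+\varepsilon$ in the appropriate sense. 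First I would dispose of the ordinary $r$-powers: since $f$ is the morphism from Dejean's construction (or a close variant with the same desirable property), its fixed point is known to avoid ordinary $\tfrac74^+$-powers; alternatively this is itself a finite check via the synchronization argument below. So the new content is the reverse $r$-powers $xy\rev{x}$.

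For reverse powers, the key structural tool is a \emph{synchronization} (or ``cut'') property of the prefix code $P=f(\Sigma_3)$: because $f$ is $24$-uniform and the $24$ blocks are mutually ``almost unbordered'' in the relevant sense, any occurrence of a sufficiently long factor of $w$ has a unique cut over $P$ — i.e.\ its position relative to block boundaries is determined. (This is where the particular structure of $f$, e.g.\ that each block is built from the six permutations of $\tt{123}$ arranged symmetrically à la Frid, gets used.) I would make this precise as a lemma: there is a constant $C$ (something like $C=2$ or $3$ blocks, $48$ or $72$ letters) such that every factor of $w$ of length $\ge C$ has a unique cut. Given this, if $u=xy\rev{x}$ is long enough that $x$ itself has length $\ge C$, then the cut of the first copy $x$ and the cut of $\rev{x}$ are both determined; since reversal maps a prefix-code factorization of $x$ to a suffix-code factorization of $\rev{x}$, the alignment forces either $24\mid p$ exactly, or $p$ is small (at most $C$ or so). In the case $24\mid p$, writing $p=24q$, the structure $xy\rev{x}$ descends to a factor $x'y'\rev{x'}^{\,?}$ of $w$ under $f^{-1}$ — but reversal does \emph{not} commute with $f$ in general, so here I would instead argue that $f(a)$ and $\rev{f(b)}$ agree on a long stretch only for very restricted $a,b$, and push this down to get a shorter reverse power in $w$, contradicting minimality (infinite descent). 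That leaves $p\le C$: finitely many periods, each giving a bounded-length factor to rule out by direct inspection of the prefix of $w$.

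The main obstacle, and the step I'd spend the most care on, is the interaction of reversal with the morphism. Unlike ordinary powers — where $f(u)$ being an $r$-power forces $u$ to be an $r$-power of comparable exponent, so classical Dejean-style desubstitution applies verbatim — a factor $xy\rev x$ in $w=f(w)$ does not obviously come from a factor of the same shape in $w$, precisely because $\rev{f(u)}\ne f(\rev u)$. I would handle this by a ``reversal-aware'' synchronization argument: look at the interface where the left copy $x$ (read forward) and $\rev x$ (which equals the right copy read forward, reversed) must match, expand both as block sequences, and observe that matching a forward block against the reverse of a block severely constrains which blocks can appear; cataloguing the finitely many forward-block/reverse-block agreements of length $\ge$ some small threshold reduces everything to a finite check. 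A clean alternative, consistent with the sentence in the paper noting that a referee verified this in \tt{Walnut}, is to phrase the whole statement as a first-order property of the automatic sequence $w$ (``for all factors $xyx'$ with $x'\in\{x,\rev x\}$, $4|xy|\ge |xyx'|$, allowing $|xyx'|=4|xy|$ only when the period does not fit'') and cite the decision procedure; but for a self-contained proof I would carry out the synchronization-plus-finite-check route, with the reverse-block agreement catalogue as the crux.
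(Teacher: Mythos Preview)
Your handling of the reverse case is where the proposal diverges from the paper, and it is both more complicated and not quite sound as written. The paper's argument for $xy\rev{x}$ is a one-liner: an exhaustive check shows that $f^\omega(\tt{1})$ has no reversible factor of length greater than $18$. Since $x$ is itself a reversible factor (both $x$ and $\rev{x}$ occur in $w$), this forces $|x|\le 18$, hence $|y|<6$ and $|xy\rev{x}|<42$, and a finite check of $f^3(\tt{1})$ finishes. No synchronization, no descent, no block/reverse-block catalogue is needed. You are working much harder than necessary because you missed this observation.

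More seriously, the step ``the alignment forces either $24\mid p$ exactly, or $p$ is small'' does not go through in the reverse case. For an ordinary power $xyx$, a cut in $x$ sits at the same relative position in both occurrences of $x$, so two block boundaries in $w$ differ by $p$ and $24\mid p$ follows. For $xy\rev{x}$, the left copy is $x$ and the right copy is $\rev{x}$; a cut in $x$ (determined by $x$ as a word) tells you nothing about where the cut in $\rev{x}$ sits, because $\rev{x}$ is a different word with its own synchronization. So your dichotomy is unjustified, and you implicitly acknowledge this by immediately retreating to ``$f(a)$ and $\rev{f(b)}$ agree on a long stretch only for very restricted $a,b$'' --- which is a different argument, and one you leave as a vague programme rather than carry out. (Carried out, it would in fact lead you to the paper's observation: long agreement between forward and reversed blocks is impossible, so reversible factors are short.)

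Two smaller points. First, $f$ is \emph{not} Dejean's morphism --- the paper says so explicitly, noting that Dejean's fixed point contains undirected $\tfrac74^+$-powers --- so you cannot cite Dejean for the ordinary case; you must run the desubstitution yourself, which the paper does (and which matches your ``synchronization argument below''). Second, your inequality bookkeeping is off: $r>\tfrac74$ with $r=(p+e)/p$ gives $e>\tfrac{3}{4}\,p$, but the paper (and the cleaner parametrization) uses $|x|>3|y|$, i.e.\ the excess exceeds three times the gap; these are equivalent, but your ``$e\ge\lceil\tfrac34 p\rceil+\varepsilon$'' is muddled.
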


\begin{proof}
We first show that $f^\omega(\tt{1})$ has no factors of the form $xyx^R$ with $|x|>3|y|$ (which is equivalent to $|xy\rev{x}|/|xy|>7/4$).  By exhaustively checking all factors of length $19$ of $f^\omega(1)$, we find that $f^\omega(\tt{1})$ has no reversible factors of length greater than $18$.  So if $f^\omega(\tt{1})$ has a factor of the form $xyx^R$ with $|x|>3|y|$, then $|x|\leq 18$, and in turn $|y|<6$.  So $|xyx^R|<42$.  Every factor of length at most $41$ appears in $f^3(\tt{1})$, so by checking this prefix exhaustively we conclude that $f^\omega(\tt{1})$ has no factors of this form.

It remains to show that $f^\omega(\tt{1})$ is (ordinary) $\tfrac{7}{4}^+$-free.  Suppose towards a contradiction that $f^\omega(\tt{1})$ has a factor $xyx$ with $|x|>3|y|$.  Let $n$ be the smallest number such that a factor of this form appears in $f^n(\tt{1})$.  By exhaustive check, we have $n>3$.  First of all, if $|x|\leq 27$, then $|xyx|<63$.  Every factor of $f^\omega(\tt{1})$ of length at most $62$ appears in $f^3(\tt{1})$, so we may assume that $|x|\geq 28$.  Then $x$ contains at least one of the factors $\tt{12131}$, $\tt{23212}$, or $\tt{31323}.$  By inspection, each one of these factors determines a cut in $x$ over the blocks of $f$, say $x=s_x\vert x'\vert p_x$, where $s_x$ is a possibly empty proper suffix of a block of $f$, and $p_x$ is a possibly empty proper prefix of a block of $f$.  If $y$ is properly contained in a single block, then 
\[
xyx=s_x\vert x'\vert p_xys_x\vert x'\vert p_x.
\]
In this case, one verifies that the preimage of $xyx$ contains a square, which contradicts the minimality of $n$.  Otherwise, if $y$ is not properly contained in a single block of $f$, then $y=s_y\vert y'\vert p_y$, where $s_y$ is a possibly empty proper suffix of a block, and $p_y$ is a possibly empty proper prefix of a block.  Then
\[
xyx=s_x\vert x'\vert p_xs_y\vert y'\vert p_ys_x\vert x'\vert p_x,
\]
which appears internally as
\[
\vert p_ys_x\vert x'\vert p_xs_y\vert y'\vert p_ys_x\vert x'\vert p_xs_y\vert.
\]
The preimage of this factor is $ax_1by_1ax_1b,$ where $f(a)=p_ys_x$, $f(b)=p_xs_y$, $f(x_1)=x'$, and $f(y_1)=y'.$  Then $19|ax_1b|\geq |x|>3|y|\geq 3\cdot 19|y_1|,$ or equivalently $|ax_1b|>3|y_1|,$ which contradicts the minimality of $n$.
\end{proof}

\noindent
Thus, we conclude that $\URT(3)=\RT(3)=\frac{7}{4}$.  We will see in the next section that $\URT(k)$ is strictly greater than $\RT(k)$ for every $k\geq 4$.  

\section{A lower bound on \texorpdfstring{\boldmath{$\URT(k)$}}{URT(k)} for \texorpdfstring{\boldmath{$k\geq 4$}}{k>=4}}\label{Backtrack}

Here, we prove that $\URT(k)\geq (k-1)/(k-2)$ for $k\geq 4$.
%It follows that $\URT(k)>\RT(k)$ for $k\geq 4$.

\begin{theorem}\label{LowerBoundBacktrack}
If $k\geq 4$, then $\URT(k)\geq \tfrac{k-1}{k-2}$, and the longest word over $\Sigma_k$ that is undirected $(k-1)/(k-2)$-free has length $k+3$.
\end{theorem}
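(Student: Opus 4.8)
The plan is to establish the two claims separately: first the lower bound $\URT(k)\geq (k-1)/(k-2)$, and then the exact value of the longest undirected $(k-1)/(k-2)$-free word over $\Sigma_k$. The key observation linking them is that a word is undirected $(k-1)/(k-2)$-\emph{free} precisely when it contains no factor $xyx'$ with $x'\in\{x,\rev{x}\}$ and $|xyx'|/|xy|\geq (k-1)/(k-2)$, i.e.\ with $(k-2)|x'|\geq |y|$, equivalently $|y|\leq (k-2)|x|$. The most restrictive instances are the short ones: taking $|x|=1$, we see that a letter $a$ may not be followed (at distance at most $k-2$) by either $a$ or $a$ again (the reverse of a single letter is itself), so \emph{any} letter that occurs twice must have at least $k-1$ letters strictly between its two occurrences. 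From this it already follows that a word of length $\geq k+4$ over $\Sigma_k$ contains a repeated letter at distance $\leq k-1$, forcing an undirected $(k-1)/(k-2)$-power; so any such word is not undirected $(k-1)/(k-2)$-free, which gives $\URT(k)\geq (k-1)/(k-2)$ and the upper bound $k+3$ on the length.

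For the lower-bound inequality, I would make the counting argument precise: suppose $w$ over $\Sigma_k$ is undirected $(k-1)/(k-2)$-free and has length $n$. Consider any window of $k$ consecutive letters of $w$. If two of them were equal, say in positions $i<j$ with $j-i\leq k-1$, then writing $x$ for the letter in position $i$, $x'=x=\rev{x}$ for the letter in position $j$, and $y$ for the block strictly between them, we get a factor $xyx'$ with $|y|=j-i-1\leq k-2=(k-2)|x|$, hence $|xyx'|/|xy|=(|y|+2)/(|y|+1)\geq (k-1)/(k-2)$, a contradiction. So every window of $k$ consecutive letters consists of $k$ distinct letters, i.e.\ is a permutation of $\Sigma_k$. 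This severely constrains $w$: once a window of length $k$ is fixed, the next letter is forced to equal the letter leaving the window on the left, so $w$ is determined by its first $k$ letters and is eventually periodic with period $k$ — but then for $n$ large one quickly exhibits a forbidden undirected power (indeed a reverse power) from the periodic structure, ruling out arbitrarily long $w$ and completing the lower bound $\URT(k)\geq (k-1)/(k-2)$.

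For the exact extremal length, the upper bound $n\leq k+3$ requires squeezing the periodicity argument further. I would argue that if $|w|\geq k+4$ then $w$ has at least five overlapping windows of length $k$, the periodic forcing above pins down the tail, and among the resulting factors one finds a reverse power $xy\rev{x}$ with $(k-2)|x|\geq |y|$ — the natural candidate being built from a short $x$ of length $2$ placed symmetrically around a palindromic-looking gap created by the forced structure; a careful case check on the first $k$ letters shows the constraint is violated. Conversely, to show $k+3$ is attained I would simply exhibit one word of length $k+3$ over $\Sigma_k$ that is undirected $(k-1)/(k-2)$-free and verify it directly: a good candidate is $\tt{1\,2}\cdots\tt{k}\,\tt{1\,2\,3}$ or a small perturbation thereof, checking that every repeated letter sits at distance $\geq k-1$ and that no reverse power $xy\rev{x}$ with $|x|\geq 2$ and $|y|\leq (k-2)|x|$ appears (only finitely many short cases to inspect).

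The main obstacle I anticipate is the \emph{reverse} powers in the extremal-length half: the letter-counting argument handles $|x|=1$ cleanly and forces near-periodicity, but ruling out the length-$(k+4)$ case and nailing down exactly which word of length $k+3$ works needs a delicate finite analysis of how reverse powers $xy\rev{x}$ with $|x|=2$ or $3$ interact with the forced permutation-window structure; this is where I expect the real work (and possibly a short computer-assisted or "backtracking" search, as the section title hints) to lie.
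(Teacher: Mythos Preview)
Your proposal contains an arithmetic slip at the very first step, and unfortunately the entire strategy rests on it. You claim that $xyx'$ with $x'\in\{x,\rev{x}\}$ satisfies $|xyx'|/|xy|\geq (k-1)/(k-2)$ exactly when $|y|\leq (k-2)|x|$. But $|xyx'|/|xy|=(|y|+2|x|)/(|y|+|x|)\geq (k-1)/(k-2)$ is equivalent to $|y|\leq (k-3)|x|$, not $(k-2)|x|$. With $|x|=1$ this forbids only $|y|\leq k-3$, so two equal letters must have at least $k-2$ (not $k-1$) letters strictly between them. Consequently every length-$(k-1)$ window consists of distinct letters, but a length-$k$ window need not: the first and last letters may coincide. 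So the ``next letter is forced'' claim fails; at each step there are \emph{two} admissible letters, and the word is not periodic with period $k$.

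This also kills your candidate witness. The word $\tt{12}\cdots\tt{k123}$ contains the ordinary repetition $\tt{12}\,y\,\tt{12}$ with $|y|=k-2$, of exponent $(k+2)/k$; and $(k+2)/k\geq (k-1)/(k-2)$ for all $k\geq 4$, so this word is not undirected $(k-1)/(k-2)$-free. The paper's proof uses the correct observation (windows of length $k-1$ are permutations, so branching factor two beyond the prefix $\tt{12}\cdots\tt{(k-1)}$) and then carries out an explicit backtracking tree of depth five, checking that every leaf at length $k+4$ contains an undirected $r$-power with $r\geq (k-1)/(k-2)$; several of the branches are eliminated by genuine \emph{reverse} powers such as $\tt{23}\cdots\tt{(k-1)k132}$. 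A surviving word of length $k+3$ is, for instance, $\tt{12}\cdots\tt{(k-1)1k24}$. The cases $k\in\{4,5\}$ need a separate finite check because the general tree argument uses $k\geq 6$.
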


\begin{proof}
For $k\in\{4,5\}$, the statement is checked by a standard backtracking algorithm, which we performed both by hand and by computer.  We now provide a general backtracking argument for all $k\geq 6$.

Fix $k\ge 6$, and suppose that $w\in \Sigma_k^*$ is a word of length $k+4$ that is undirected $(k-1)/(k-2)$-free. It follows that at least $k-2$ letters must appear between any two repeated occurrences of the same letter in $w$, so that any length $k-1$ factor of $w$ must contain $k-1$ distinct letters.  So we may assume that $w$ has prefix $\tt{12}\cdots\tt{(k-1)}$.  Further, given any prefix $u$ of $w$ of length at least $k-1$, there are only two possibilities for the next letter in $w$, as it must be distinct from the $k-2$ distinct letters preceding it.  These possibilities are enumerated in the tree of Figure~\ref{BacktrackingTree}.

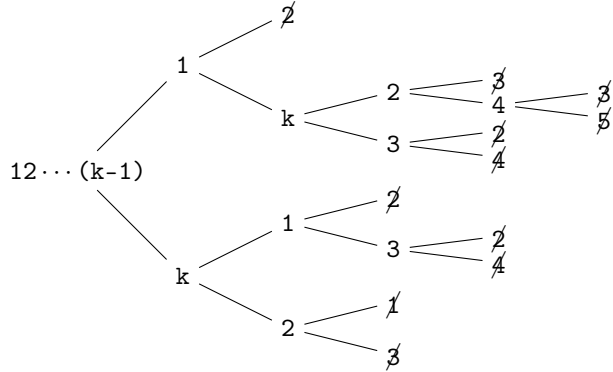
\begin{figure}[htb]
    \centering
% Set the overall layout of the tree
\tikzstyle{level 1}=[level distance=2cm, sibling distance=4cm]
\tikzstyle{level 2}=[level distance=2cm, sibling distance=2cm]
\tikzstyle{level 3}=[level distance=2cm, sibling distance=1cm]
\tikzstyle{level 4}=[level distance=2cm, sibling distance=0.5cm]
\tikzstyle{level 5}=[level distance=2cm, sibling distance=0.5cm]

% Define styles for bags and leafs
\tikzstyle{start} = []
\tikzstyle{letter} = []

% The sloped option gives rotated edge labels. Personally
% I find sloped labels a bit difficult to read. Remove the sloped options
% to get horizontal labels. 
\begin{tikzpicture}[grow=right, sloped,scale=0.7]
\node[start,left] {$\tt{12}\cdots\tt{(k-1)}$}
    child {
        node[letter] {\tt{k}}        
            child {
                node[letter] {\tt{2}}
                child {
                    node[letter] {\cancel{\tt{3}}}
                    edge from parent
                }
                child {
                    node[letter] {\cancel{\tt{1}}}
                    edge from parent
                }
                edge from parent
            }
            child {
                node[letter] {\tt{1}}
                child {
                    node[letter] {\tt{3}}
                    child {
                        node[letter] {\cancel{\tt{4}}}
                        edge from parent
                    }
                    child {
                        node[letter] {\cancel{\tt{2}}}
                        edge from parent
                    }
                    edge from parent
                }
                child {
                    node[letter] {\cancel{\tt{2}}}
                    edge from parent
                }
                edge from parent
            }
            edge from parent 
    }
    child {
        node[letter] {\tt{1}}        
        child {
                node[letter] {\tt{k}}
                child {
                    node[letter] {\tt{3}}
                    child {
                        node[letter] {\cancel{\tt{4}}}
                        edge from parent
                    }
                    child {
                        node[letter] {\cancel{\tt{2}}}
                        edge from parent
                    }
                    edge from parent
                }
                child {
                    node[letter] {\tt{2}}
                    child {
                        node[letter] {\tt{4}}
                        child {
                            node[letter] {\cancel{\tt{5}}}
                            edge from parent
                        }
                        child {
                            node[letter] {\cancel{\tt{3}}}
                            edge from parent
                        }
                        edge from parent
                    }
                    child {
                        node[letter] {\cancel{\tt{3}}}
                        edge from parent
                    }
                    edge from parent
                }
                edge from parent
            }
            child {
                node[letter] {\cancel{\tt{2}}}
                edge from parent
            }
        edge from parent         
    };
\end{tikzpicture}
    \caption{The tree of undirected $(k-1)/(k-2)$-power free words on $k$ letters.}
    \label{BacktrackingTree}
\end{figure}

We now explain why each word corresponding to a leaf of the tree contains an undirected $r$-power for some $r\geq (k-1)/(k-2)$.  We examine the leaves from top to bottom, and use the fact that $(k+1)/(k-1)>(k+2)/k>(k-1)/(k-2)$ when $k\geq 6$.
\begin{itemize}
\item The factor $\tt{12}\cdots\tt{(k-1)12}$ is an ordinary $(k+1)/(k-1)$-power.  
\item The factor $\tt{23}\cdots\tt{(k-1)1k23}$ is an ordinary  $(k+2)/k$-power.  
\item The factor $\tt{34}\cdots\tt{(k-1)1k243}$ is a reverse $(k+2)/k$-power.
\item The factor $\tt{45}\cdots\tt{(k-1)1k245}$ is an ordinary  $(k+1)/(k-1)$-power.
\item The factor $\tt{23}\cdots\tt{(k-1)1k32}$ is an ordinary $(k+2)/k$-power.
\item The factor $\tt{34}\cdots\tt{(k-1)1k34}$ is an ordinary $(k+1)/(k-1)$-power.
\item The factor $\tt{12}\cdots\tt{(k-1)k12}$ is an ordinary  $(k+2)/k$-power.
\item The factor $\tt{23}\cdots\tt{(k-1)k132}$ is a reverse $(k+2)/k$-power.
\item The factor $\tt{34}\cdots\tt{(k-1)k134}$ is an ordinary $(k+1)/(k-1)$-power.
\item The factor $\tt{12}\cdots\tt{(k-1)k21}$ is a reverse $(k+2)/k$-power.
\item The factor $\tt{23}\cdots\tt{(k-1)k23}$ is an ordinary $(k+1)/(k-1)$-power.\qedhere
\end{itemize}
\end{proof}

Conjecture~\ref{conj} proposes that the value of $\URT(k)$ matches the lower bound of Theorem~\ref{LowerBoundBacktrack} for all $k\geq 4$.  In the next section, we confirm Conjecture~\ref{conj} for some small values of $k$.

\section{\texorpdfstring{\boldmath{$\URT(k)=\tfrac{k-1}{k-2}$}}{URT(k)=(k-1)/(k-2)} for all \texorpdfstring{\boldmath{$k\in\{4,5,\ldots,21\}$}}{k in {4,5,...,21}}}\label{URT4}

First we explain why we rely on a different type of construction than the one we used to prove that $\URT(3)=\tfrac{7}{4}$ in Section~\ref{URT3}.  A morphism $h:A^*\rightarrow B^*$ is called $\alpha$-free ($\alpha^+$-free, respectively) if it maps every $\alpha$-free ($\alpha^+$-free, respectively) word in $A^*$ to an $\alpha$-free ($\alpha^+$-free, respectively) word in $B^*$.  The morphism $h$ is called \emph{growing} if $h(a)>1$ for all $a\in A^*$.  Brandenburg~\cite{Brandenburg1983} demonstrated that for every $k\geq 4$, there is no growing $\RT(k)^+$-free morphism from $\Sigma_k^*$ to $\Sigma_k^*$.  By a minor modification of his proof, one can show that there is no growing $(k-1)/(k-2)^+$-free morphism from $\Sigma_k^*$ to $\Sigma_k^*$.  While this does not entirely rule out the possibility that there is a morphism from $\Sigma_k^*$ to $\Sigma_k^*$ whose fixed point is $(k-1)/(k-2)^+$-free, it suggests that a different type of construction may be required.  Our constructions rely on a variation of the encoding introduced by Pansiot~\cite{Pansiot1984} in showing that $\RT(4)=7/5$.  Pansiot's encoding was later used in all subsequent work on Dejean's Conjecture.

\subsection{A ternary encoding}

We first describe an alternate definition of ordinary $r$-powers which will be useful in this section.  A word $w=w_1\cdots w_n$, where the $w_i$ are letters, is \emph{periodic} if for some positive integer $q$, we have $w_{i+q}=w_i$ for all $1\leq i\leq n-q$.  In this case, the integer $q$ is called a \emph{period} of $w$.  The \emph{exponent} of $w$, denoted $\exp(w),$ is the ratio between its length and its minimal period.  If $r=\exp(w)$, then $w$ is an \emph{$r$-power}.\footnote{If $r\leq 2$, then $w$ is an $r$-power as we have defined it in Section~\ref{Intro}. If $r>2$, then we take this as the definition of an (ordinary) $r$-power. For example, the English word \tt{alfalfa} has minimal period $3$ and exponent $\tfrac{7}{3},$ so it is a $\tfrac{7}{3}$-power.}  We can write any $r$-power $w$ as $w=pe$, where $|pe|/|p|=r$ and $e$ is a prefix of $pe$.  In this case, we say that $e$ is the \emph{excess} of the $r$-power $w$.

Suppose that $w\in \Sigma_k^*$ is an undirected $(k-1)/(k-2)^+$-free word that contains at least $k-1$ distinct letters.  Write $w=w_1w_2\cdots w_n$ with $w_i\in \Sigma_k$.  Certainly, every length $k-2$ factor of $w$ contains $k-2$ distinct letters, and it is easily checked that every length $k$ factor of $w$ contains at least $k-1$ distinct letters.

Now let $w\in \Sigma_k^*$ be any word containing at least $k-1$ distinct letters and satisfying these two properties:
\begin{itemize}
    \item Every length $k-2$ factor of $w$ contains $k-2$ distinct letters; and
    \item Every length $k$ factor of $w$ contains at least $k-1$ distinct letters.
\end{itemize}
Let $u$ be the shortest prefix of $w$ containing $k-1$ distinct letters.  We see immediately that $u$ has length $k-1$ or $k$.  Write $w=uv$, where $v=v_1v_2\cdots v_{n}$ with $v_i\in \Sigma_k$.  Define $p_0=u$ and $p_i=uv_1\cdots v_i$ for all $i\in\{1,\dots,n\}$.  For all $i\in\{0,1,\dots,n\}$, the prefix $p_i$ determines a permutation
\[
r_i=\begin{pmatrix}
\tt{1} & \tt{2} & \dots & \tt{k}\\ 
r_i[\tt{1}] & r_i[\tt{2}] & \dots & r_i[\tt{k}]
\end{pmatrix},
\]
of the letters of $\Sigma_k$, which ranks the letters of $\Sigma_k$ by the index of their final appearance in $p_i$.  In other words, the word $r_i[\tt{3}] \cdots r_i[\tt{k}]$ is the length $k-2$ suffix of $p_i$, and of the two letters in $\Sigma_k\backslash\{r_i[3],\dots,r_i[k]\}$, the letter $r_i[\tt{2}]$ is the one that appears last in $p_i$. Note that the final letter $r_i[\tt{1}]$ may not even appear in $p_i$.  For example, on $\Sigma_6,$ the prefix \tt{123416} gives rise to the permutation
\[
\begin{pmatrix}
\tt{1} & \tt{2} & \tt{3} & \tt{4} & \tt{5} & \tt{6}\\
\tt{5} & \tt{2} & \tt{3} & \tt{4} & \tt{1} & \tt{6}
\end{pmatrix}.
\]
Since every factor of length $k-2$ in $w$ contains $k-2$ distinct letters, for any $i\in\{1,\dots,n\}$, the letter $v_i$ must belong to the set $\{r_{i-1}[1],r_{i-1}[2],r_{i-1}[3]\}.$  This allows us to encode the word $w$ over a ternary alphabet, as described explicitly below.

For $1\leq i\leq n$, define $t(w)=t_1\cdots t_{n}$, where for all $1\leq i\leq n$, we have
\[
t_i=\begin{cases}
\tt{1}, & \text{if $v_i=r_{i-1}[1]$};\\
\tt{2}, & \text{if $v_i=r_{i-1}[2]$};\\
\tt{3}, & \text{if $v_i=r_{i-1}[3]$}.
\end{cases}
\]

\noindent
For example, on $\Sigma_5$, for the word $w=\tt{12342541243},$
the shortest prefix containing $4$ distinct letters is $1234$, and $w$ has encoding $t(w)=\tt{3131231}.$
Given the shortest prefix of $w$ containing $k-1$ distinct letters, and the encoding $t(w)$, we can recover $w$.  Moreover, if $w$ has period $q<n$, then so does $t(w)$.  The exponent $|w|/q$ of $w$ corresponds to an exponent $|v|/q$ of $t(w)$.

Let $S_k$ denote the symmetric group on $\Sigma_k$ with left multiplication.  Define a morphism $\sigma:\Sigma_3^*\rightarrow S_k$ by
\begin{align*}
\sigma(1)&=\begin{pmatrix}
    1 & 2 & 3 & 4 & \dots & k-1 & k \\
    2 & 3 & 4 & 5 & \dots & k & 1
  \end{pmatrix}\\
\sigma(2)&=\begin{pmatrix}
    1 & 2 & 3 & 4 & \dots & k-1 & k \\
    1 & 3 & 4 & 5 & \dots & k & 2
  \end{pmatrix}\\
\sigma(3)&=\begin{pmatrix}
    1 & 2 & 3 & 4 & \dots & k-1 & k \\
    1 & 2 & 4 & 5 & \dots & k & 3
  \end{pmatrix}.
\end{align*}
One proves by induction that $r_0\sigma(t(p_i))=r_i$.  It follows that if $w=pe$ has period $|p|$, and $e$ contains at least $k-1$ distinct letters, then the length $|p|$ prefix of $t(w)$ lies in the kernel of $\sigma$.  In this case, the word $t(w)$ is called a \emph{kernel repetition}. For example, over $\Sigma_4$, the word
\[
w=\tt{123243414212324}
\]
has period $10$, and excess $\tt{12324}$.  Hence, the encoding $t(w)=\tt{312313123131}$ is a kernel repetition; one verifies that $\sigma(\tt{3123131231})=\mbox{id}$.

The following straightforward lemma will be used to bound the length of reversible factors in the words that we construct.

\begin{Lemma}\label{ReversibleFactors}
Let $k\geq 4$, and let $w\in \Sigma_k^*$ be a word with encoding $t(w)\in\Sigma_3^*$.  Suppose that neither $\tt{312}$ nor $\tt{322}$ is a factor of $t(w)$.  Let $u$ be a factor of $w$ whose encoding $t(u)$ contains the factor $\tt{1231}$.  Then $\rev{u}$ is not a factor of $w$.
\end{Lemma}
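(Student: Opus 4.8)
The plan is to turn the hypothesis ``$t(u)$ contains $\tt{1231}$'' into explicit information about the letters of $w$ near an occurrence of $u$, and then to show that if $\rev{u}$ were also a factor of $w$, the encoding of $w$ read along the corresponding occurrence would be forced to contain $\tt{312}$ or $\tt{322}$.

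First I would fix an occurrence of $u$ in $w$ and let $w_jw_{j+1}w_{j+2}w_{j+3}$ be the four letters carrying the encoding symbols $\tt{1},\tt{2},\tt{3},\tt{1}$; set $c_i=w_{j+i-1}$. Because $\tt{1231}$ appears in $t(u)$, the block $c_1c_2c_3c_4$ lies past the (length $k-1$ or $k$) prefix on which $u$'s encoding is initialised, so there are at least $k-1$ letters of $u$ to the left of $c_1$; in particular $u$ contains the whole block $w_{j-k+2}\cdots w_{j+3}$, and this is the point where it matters that $u$ is long enough to the left of $c_1$. Reading off the meaning of the symbols $\tt{1},\tt{2},\tt{3}$ --- equivalently, applying $\sigma(1),\sigma(2),\sigma(3)$ to the ranking $r_{j-1}$ --- I would record: the $\tt{3}$ at $c_3$ says precisely that $c_3=w_{j-k+4}$ with no occurrence of this letter in between; the $\tt{2}$ at $c_2$, together with the fact that the other letter absent from the window $w_{j-k+2}\cdots w_{j-1}$ is less recent than $w_{j-k+2}$, gives $c_2=w_{j-k+2}$; and the two $\tt{1}$'s identify $c_1$ and $c_4$ as the two letters of $\Sigma_k$ missing from that window. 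In particular $c_1,c_2,c_3,c_4$ are pairwise distinct and $w_{j-k+2},\dots,w_{j-1}$ are $k-2$ distinct letters.

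Now suppose $\rev{u}$ is a factor of $w$. Then the reversed block $w_{j+3}w_{j+2}\cdots w_{j-k+2}$ occurs in $w$, say at positions $\ell,\ell+1,\dots,\ell+k+1$ with $w_{\ell+i}=w_{j+3-i}$, and I would compute the encoding symbols of $w$ at the three positions $\ell+k-1,\ell+k,\ell+k+1$. At $\ell+k-1$ the letter is $c_3$, whose preceding length-$(k-2)$ window is $c_3,c_2,c_1,w_{j-1},\dots,w_{j-k+5}$ with $c_3$ occurring only at the oldest end, so $c_3$ is the $(k-2)$-th most recent letter there and the symbol is $\tt{3}$. At $\ell+k$ the letter is $w_{j-k+3}$, which is absent from its preceding window $c_2,c_1,w_{j-1},\dots,w_{j-k+4}$, so the symbol is $\tt{1}$ or $\tt{2}$. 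At $\ell+k+1$ the letter is $c_2$, whose preceding window $c_1,w_{j-1},\dots,w_{j-k+3}$ consists of $k-2$ distinct letters, so the two absent letters there are $c_2$ and $c_4$; since $c_2$ last occurs at $\ell+2$ and $c_4$ last occurs at $\ell$, the letter $c_2$ is the more recent of the two absent letters, so the symbol is $\tt{2}$. Hence $t(w)$ contains $\tt{3}\tt{x}\tt{2}$ with $\tt{x}\in\{\tt{1},\tt{2}\}$, i.e.\ $\tt{312}$ or $\tt{322}$ --- the desired contradiction, so $\rev{u}$ is not a factor of $w$.

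The only genuine obstacle I foresee is the bookkeeping in the last two steps: pinning down $c_2=w_{j-k+2}$ and ``$c_4$ less recent than $c_2$'' requires tracking the recency order of the two window-absent letters rather than merely gap lengths, and one must keep the distinctness relations among $c_1,c_2,c_3,c_4$ and the window letters in order (the degenerate identifications for small $k$, such as $w_{j-k+4}=w_j$ when $k=4$, should be noted but do not affect the argument). The structural point already flagged --- that the occurrence of $\tt{1231}$ lies at least $k-2$ letters to the right of the left end of $u$ --- is exactly what makes the reversed block an honest factor of $\rev{u}$, and hence what gives all three encoding symbols room to be read off.
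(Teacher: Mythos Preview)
Your proposal is correct and follows essentially the same approach as the paper. The paper's proof condenses everything into two sentences: it observes that the $(k+2)$-letter block around the $\tt{1231}$ is, up to relabelling, the word $\tt{12}\cdots\tt{(k-1)13k}$, and then states ``by inspection'' that its reversal $\tt{k31(k-1)}\cdots\tt{321}$ has encoding $\tt{312}$ or $\tt{322}$. You perform exactly this computation, but explicitly and with abstract letter names $c_1,\dots,c_4$ and $w_{j-k+2},\dots,w_{j-1}$ rather than the normalised alphabet---in particular, your determination of the three encoding symbols $\tt{3},\tt{x},\tt{2}$ (with $\tt{x}\in\{\tt{1},\tt{2}\}$) at positions $\ell+k-1,\ell+k,\ell+k+1$ is precisely the ``inspection'' the paper omits. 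Your remark that the claim ``$c_1,c_2,c_3,c_4$ pairwise distinct'' fails for $k=4$ (where $c_3=c_1$) is correct, and as you note it does not affect the argument, since the encoding computations at the three relevant positions never rely on $c_1\neq c_3$.
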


\begin{proof}
Since $t(u)$ contains the factor $\tt{1231}$, the word $u$ contains some permutation of the factor $\tt{123}\cdots \tt{(k-1)13k}$.  By inspection, the reversal of this word, namely $\tt{k31(k-1)(k-2)}\cdots\tt{321}$, has encoding  $\tt{312}$ or $\tt{322}$, neither of which is a factor of the encoding $t(w)$ by assumption.  We conclude that $\rev{u}$ is not a factor of $w$.
\end{proof}

% \noindent
% Note that
% \[
% \sigma(31)=\begin{cases}\begin{pmatrix}
% 1 & 4 & 2
% \end{pmatrix} \mbox{ if } k=4;\\
% \begin{pmatrix}
% 1 & k & k-2 & \dots & 2 
% \end{pmatrix}
% \begin{pmatrix}
% 3 & k-1 & k-3 & \dots & 5
% \end{pmatrix} \mbox{ if $k>4$, $k$ even};\\
% \begin{pmatrix}
% 1 & k & k-2 & \dots & 3
% \end{pmatrix}
% \begin{pmatrix}
% 2 & k-1 & k-3 & \dots & 4
% \end{pmatrix} \mbox{ if  $k>4$, $k$ odd}.
% \end{cases}
% \]

\newpage

\subsection{Constructions}

For $k\in\{4,5,\ldots,21\}$, define the morphism $f_k:\Sigma_2^*\rightarrow \Sigma_2^*$ as follows:
\begin{multicols}{2}%
\footnotesize%
\noindent
\begin{align*}
f_4(\tt{1})&=\tt{12111211212}\\
f_4(\tt{2})&=\tt{11121211211}\\[5pt]
f_5(\tt{1})&=	\tt{121121212}\\
f_5(\tt{2})&=\tt{121121211}\\[5pt]
f_6(\tt{1})&=\tt{1122212}\\
f_6(\tt{2})&=\tt{1122211}\\[5pt]
f_7(\tt{1})&=	\tt{121212112121211}\\
f_7(\tt{2})&=\tt{211211112121112}\\[5pt]
f_8(\tt{1})&=\tt{11212211112122}\\
f_8(\tt{2})&=\tt{12211212121221}\\[5pt]
f_9(\tt{1})&=\tt{11212121121121121}\\
f_9(\tt{2})&=\tt{21212112121112112}\\[5pt]
f_{10}(	\tt{1})&=\tt{1211121212121211121112111}\\
f_{10}(	\tt{2})&=\tt{2111121121121112111121112}\\[5pt]
f_{11}(\tt{1})&=\tt{1121121121121211211}\\
f_{11}(\tt{2})&=\tt{2121121121121211112}\\[5pt]
f_{12}(	\tt{1})&=\tt{11211112111121121121211}\\
f_{12}(	\tt{2})&=\tt{21211212112111121121212}
\end{align*}%
\begin{align*}
f_{13}(	\tt{1})&=\tt{1211212111212121}\\
f_{13}(	\tt{2})&=\tt{2121212111212112}\\[5pt]
f_{14}(	\tt{1})&=\tt{111211212111211121121}\\
f_{14}(	\tt{2})&=\tt{211211112111212121112}\\[5pt]
f_{15}(	\tt{1})&=\tt{12121211112112121}\\
f_{15}(	\tt{2})&=\tt{21121212112112112}\\[5pt]
f_{16}(	\tt{1})&=\tt{1121112111121112112112111211211}\\
f_{16}(\tt{2})&=\tt{2121112112121112112112111211212}\\[5pt]
f_{17}(	\tt{1})&=\tt{1211121121121211211}\\
f_{17}(	\tt{2})&=\tt{2111121121121211212}\\[5pt]
f_{18}(	\tt{1})&=\tt{112112112111211211211}\\
f_{18}(	\tt{2})&=\tt{211112112121211211112}\\[5pt]
f_{19}(	\tt{1})&=\tt{121112112111212111121}\\
f_{19}(	\tt{2})&=\tt{211112112111212111212}\\[5pt]
f_{20}(\tt{1})&=\tt{112121211212111211211112121211}\\
f_{20}(\tt{2})&=\tt{212121211112111211212112121112}\\[5pt]
f_{21}(	\tt{1})&=\tt{12111211211112121212121}\\
f_{21}(	\tt{2})&=\tt{21111211211212121212112}
\end{align*}  
\end{multicols}
\noindent
For all $k\not\in\{5,6,8\}$, define $g_k:\Sigma_2^*\rightarrow \Sigma_3^*$ by
{ \footnotesize
\begin{align*}
g_k(\tt{1})&=\tt{31}\\
g_k(\tt{2})&=\tt{12}.
\end{align*}}%
For $k\in\{5,6,8\}$, define $g_k:\Sigma_2^*\rightarrow \Sigma_3^*$ as follows:
\begin{multicols}{3}
\footnotesize
\noindent
\begin{align*}
g_5(\tt{1})&=\tt{3111}\\
g_5(\tt{2})&=\tt{3112}
\end{align*}
\begin{align*}
g_6(\tt{1})&=\tt{31112}\\
g_6(\tt{2})&=\tt{31131}
\end{align*}
\begin{align*}
g_8(\tt{1})&=\tt{11231}\\
g_8(\tt{2})&=\tt{11313}
\end{align*}
\end{multicols}

%For $h\in\{f_4,f_8,f_{12},g\}$, let $u$ be a factor of some word $v\in\{h(\tt{1}),h(\tt{2})\}^*$.  If $u$ has a cut, then we can write $u=s|u'|p$ where $s$ is a proper suffix of a block of $h$ and $p$ is a proper prefix of a block of $h$.  Moreover, if $s\neq \varepsilon$, then there is a unique block $h(\tt{a})$ ending in $s$, i.e., $u$ must appear internally as $h(\tt{a})|u'|p$.  This property was referred to as ``markability'' in~\cite{CurrieRampersad2011}.

\begin{theorem}\label{Main}
Fix $k\in\{4,5,\ldots,21\}$.  Let $\bm{w}_k$ be the word over $\Sigma_k$ with prefix $\tt{12}\cdots\tt{(k-1)}$ and encoding $g_k(f_k^\omega(\tt{1}))$.  Then $\bm{w}_k$ is undirected $(k-1)/(k-2)^+$-free.
\end{theorem}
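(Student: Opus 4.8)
The plan is to bound the reverse and the ordinary undirected $(k-1)/(k-2)^+$-powers in $\bm{w}_k$ separately, in each case reducing a statement about infinitely many factors of $\bm{w}_k$ to a finite computer verification together with a structural fact about the ternary encoding $t(\bm{w}_k)=g_k(f_k^\omega(\tt{1}))$. (Note first that $f_k(\tt{1})$ begins with $\tt{1}$, so $f_k^\omega(\tt{1})$, and hence $\bm{w}_k$, are well defined.) For reverse powers I would apply Lemma~\ref{ReversibleFactors}: a finite check shows that $g_k(f_k^\omega(\tt{1}))$ contains neither $\tt{312}$ nor $\tt{322}$, and — using that $f_k^\omega(\tt{1})$ has bounded runs of each letter — that every factor of $f_k^\omega(\tt{1})$ of length exceeding some $m_k$ has $g_k$-image containing $\tt{1231}$ (for instance $g_k(\tt{21})=\tt{1231}$ when $k\notin\{5,6,8\}$). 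Now if $xy\rev{x}$ were a factor of $\bm{w}_k$ with $|xy\rev{x}|/|xy|>(k-1)/(k-2)$, then $x$ and $\rev{x}$ both occur in $\bm{w}_k$, so by the contrapositive of Lemma~\ref{ReversibleFactors} the portion of $g_k(f_k^\omega(\tt{1}))$ encoding $x$ avoids $\tt{1231}$; hence the factor of $f_k^\omega(\tt{1})$ underlying $x$ has length at most $m_k$, which bounds $|x|$, then $|y|<|x|/(k-2)$, then $|xy\rev{x}|$. A finite search through the short factors of $\bm{w}_k$ finishes this half.

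For ordinary powers, suppose toward a contradiction that $\bm{w}_k$ has a factor $w=pe$ with period $q:=|p|$, excess $e$, and $|w|/|p|>(k-1)/(k-2)$, that is, $|e|>q/(k-2)$. Every length-$k$ factor of $\bm{w}_k$ contains at least $k-1$ distinct letters, so if $e$ has fewer than $k-1$ distinct letters then $|e|\le k-1$ and $|w|<(k-1)^2$, a range covered by checking the short factors of $\bm{w}_k$ directly. Otherwise $e$ contains at least $k-1$ distinct letters, and then, as explained just before Lemma~\ref{ReversibleFactors}, the encoding of $w$ inside $\bm{w}_k$ is a factor $z$ of $g_k(f_k^\omega(\tt{1}))$ that is a \emph{kernel repetition}: it has period $q$ and its length-$q$ prefix lies in $\ker\sigma$. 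Since the shortest prefix of $w$ containing $k-1$ distinct letters has length $k-1$ or $k$, we have $|z|\ge|w|-k$, so the excess of the kernel repetition $z$ has length $|z|-q\ge|e|-k>q/(k-2)-k$. It therefore suffices to prove the structural lemma: \emph{$g_k(f_k^\omega(\tt{1}))$ contains no kernel repetition of period $q$ whose excess has length exceeding $q/(k-2)-k$.}

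I would prove this lemma — the crux of the theorem — by the desubstitution method standard in the proofs of Dejean's conjecture~\cite{Pansiot1984}, adapted to the two-morphism encoding. The finite data needed for the given $f_k$ and $g_k$ are: (i) $g_k$ and $f_k$ are synchronizing, so that every long factor of $g_k(f_k^\omega(\tt{1}))$ has a unique factorization into $(g_k\circ f_k)$-blocks; (ii) there is an explicit $\beta_k$ such that every factor of $g_k(f_k^\omega(\tt{1}))$ has exponent at most $\beta_k$, which bounds the length of any factor of a given small period; and (iii) $f_k$ reflects the kernel of $\sigma\circ g_k$, i.e.\ $\sigma(g_k(f_k(s)))=\mathrm{id}$ forces $\sigma(g_k(s))=\mathrm{id}$ (a computation in $S_k$). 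Given a kernel repetition $z$ of period $q$ above a base-case bound $Q_0$, one uses (i) to align its occurrence to block boundaries, peels off one $g_k$-layer and one $f_k$-layer, invokes (iii) to keep the length-(period) prefix in $\ker\sigma$, and checks that the result is again a kernel repetition, now of period about $q/|g_k(f_k(\tt{1}))|$ and with excess scaled down by the same factor; the bounded synchronization error lost at the two ends is absorbed by the additive slack $-k$, which is exactly why strictness of $|e|>q/(k-2)$ is essential. Iterating drives the period below $Q_0$, where (ii) bounds the length of the kernel repetition, so it occurs in $g_k(f_k^{j_k}(\tt{1}))$ for a fixed $j_k$; a finite enumeration then shows no such kernel repetition has excess longer than $q/(k-2)-k$. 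The main obstacle is carrying this two-layer peeling, with all the synchronization and kernel bookkeeping, through uniformly for all $18$ pairs $(f_k,g_k)$ — verifying (i)--(iii) and checking for each $k$ that the per-step error never outpaces the shrinking of the period — and handling separately the non-uniform morphisms $g_k$ for $k\in\{5,6,8\}$.
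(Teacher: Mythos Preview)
Your plan matches the paper's proof: reverse powers are bounded via Lemma~\ref{ReversibleFactors} plus a finite check, and ordinary powers are handled by passing to kernel repetitions in the encoding and desubstituting in the style of Moulin-Ollagnier. Two clarifications worth making before you execute: the $g_k$-layer is peeled exactly \emph{once}, and thereafter one iterates only through $f_k$, working inside $f_k^\omega(\tt{1})$ with the morphism $\tau=\sigma\circ g_k$ (so the period shrinks by a factor $|f_k(\tt{1})|$ per step, not $|g_k(f_k(\tt{1}))|$); and your property~(iii) is established in the paper via the conjugation identity $\phi\cdot\tau(f_k(\tt{a}))\cdot\phi^{-1}=\tau(\tt{a})$ for each letter $\tt{a}\in\Sigma_2$, which is the concrete finite computation in $S_k$ that lifts $\ker\tau$ through $f_k$. (Incidentally, the $g_k$ for $k\in\{5,6,8\}$ are uniform, just of a different block length.)
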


The remainder of this section is devoted to proving Theorem~\ref{Main}.  Essentially, we adapt and extend the technique first used by Moulin-Ollagnier~\cite{MoulinOllagnier1992}.  A simplified version of Moulin-Ollagnier's technique, which we follow fairly closely, is exhibited by Currie and Rampersad~\cite{CurrieRampersad2011}.  

For the remainder of this section, we use notation as in Theorem~\ref{Main}, but we omit the subscripts on $\bm{w}$, $f$, and $g$ for convenience.  We let $r=|f(\tt{1})|$ and $r_g=|g(\tt{1})|$, i.e., we say that $f$ is $r$-uniform, and $g$ is $r_g$-uniform.   We use the following properties of $f$ and $g$ several times:
\begin{itemize}
\item Every factor of $f^\omega(\tt{1})$ of length $r$ contains a cut over the blocks of $f$, and every factor of $g(f^\omega(\tt{1}))$ of length $r_g$ contains a cut over the blocks of $g$.  \item The blocks $f(\tt{1})$ and $f(\tt{2})$ end in different letters, and the blocks $g(\tt{1})$ and $g(\tt{2})$ end in different letters.
\end{itemize}
The first property was verified by computer.

Before proceeding with the proof of Theorem~\ref{Main}, we discuss the kernel repetitions that appear in $g(f^\omega(\tt{1}))$.
% Let $h:A\rightarrow B$ be a morphism and let $C_h=\{h(\tt{a})\colon \ a\in A\}$.  A word $v\in B^*$ is \emph{markable} with respect to $h$ if whenever $h(X)xv$ and $h(Y)yv$ are prefixes of $h(w)$ for some word $w\in A^*$, with $x$ and $y$ proper prefixes of words in $C_h,$ then $x=y$. 
Let the factor $v=pe$ of $g(f^\omega(\tt{1}))$ be a kernel repetition with period $q$; say $g(f^\omega(\tt{1}))=xv\bm{y}$.  Let $V=x'vy'$ be the maximal period $q$ extension of the occurrence $xv\bm{y}$ of $v$.  Write $x=Xx'$ and $\bm{y}=y'\bm{Y},$ so that $g(f^\omega(\tt{1}))=XV\bm{Y}$.  Write $V=PE=EP',$ where $|P|=q$.  By the periodicity of $PE$, the factor $P$ is conjugate to $p$, and hence $P$ is in the kernel of $\sigma$.  Suppose that $E$ contains a cut over the blocks of $g$.  Then we may write $E$ uniquely in the form $E=\eta''g(\eta)\eta'$, where the word $\eta\in\Sigma_2^*$, the word $\eta''$ is a proper suffix of $g(\tt{1})$ or $g(\tt{2})$, and the word $\eta'$ is a proper prefix of $g(\tt{1})$ or $g(\tt{2})$.   Similarly, we may write $PE=\pi''g(\pi\eta)\eta'$, where the word $\pi\in\Sigma_2^*$ and the word $\pi''$ is a proper suffix of $g(\tt{1})$ or $g(\tt{2})$.  Since $E$ is a prefix of $PE$, and since $g(\tt{1})$ and $g(\tt{2})$ end in different letters, it follows from the maximality of $V$ that $\pi''=\eta''=\varepsilon$.  Finally, by the maximality of $V$, we have that $\eta'=\chi_g$, the longest common prefix of $g(\tt{1})$ and $g(\tt{2})$.  So we have $P=g(\pi)$ and $E=g(\eta)\chi_g$.   Since $E$ is a prefix of $PE$, we have that $\eta$ is a prefix of $\pi\eta$.  We see that $|P|=r_g|\pi|$ and $|E|= r_g|\eta|+|\chi_g|$.

Let $\tau:\Sigma_2^*\rightarrow S_k$ be the composite morphism $\sigma\circ g$.  
%Explicitly, we have
%\begin{align*}
%\tau(\tt{1})&=\sigma(g(\tt{1})), \mbox{ and}\\
%\tau(\tt{2})&=\sigma(g(\tt{2})).
%\end{align*}
Since $P$ is in the kernel of $\sigma$, we see that
\[
\tau(\pi)=\sigma(g(\pi))=\sigma(P)=\mathrm{id},
\]
i.e., the word $\pi$ is in the kernel of $\tau$.

Now set $\pi_0=\pi$ and $\eta_0=\eta$.  By the maximality of $PE,$ the repetition $\pi\eta=\pi_0\eta_0$ must be a maximal repetition with period $|\pi_0|$ (i.e., it cannot be extended).  If $\eta_0$ has a cut, then it follows by arguments similar to those used above that $\pi_0=f(\pi_1)$ and $\eta_0=f(\eta_1)\chi_f$, where $\eta_1$ is a prefix of $\pi_1$ and $\chi_f$ is the longest common prefix of $f(\tt{1})$ and $f(\tt{2})$.  One checks that there is an element $\phi\in S_k$ such that
\begin{align*}
\phi\cdot \tau(f(\tt{a}))\cdot \phi^{-1}=\tau(\tt{a})
\end{align*}
% \begin{align*}
% \begin{pmatrix} 2 & 3 & 4 \end{pmatrix}\cdot \tau(f(\tt{a}))\cdot \begin{pmatrix} 2 & 3 & 4 \end{pmatrix}^{-1}=\tau(\tt{a})
% \end{align*}
for every $\tt{a}\in\{\tt{1},\tt{2}\}$, i.e., the morphism $\tau$ satisfies the ``algebraic property'' described by Moulin-Ollagnier~\cite{MoulinOllagnier1992}.  It follows that $\pi_1$ is in the kernel of $\tau$.  We can repeat this process until we reach a repetition $\pi_s\eta_s$ whose excess $\eta_s$ has no cut.  Recalling that $f$ is an $r$-uniform morphism, we have
\[
|\pi_0|=r^s|\pi_s|
\]
and 
\begin{align*}
|\eta_0|&=r^s|\eta_s|+|\chi_f|\sum_{i=0}^{s-1}r^i=r^s|\eta_s|+|\chi_f|\cdot \frac{r^s-1}{r-1}.
\end{align*}

\begin{proof}[Proof of Theorem~\ref{Main}]
Let $u$ be the word with prefix $\tt{12}\cdots \tt{(k-1)}$ and encoding $t(u)=g(f^3(\tt{1}))$.  Note that $t(u)$ is a prefix of $t(\bm{w})=g(f^\omega(\tt{1}))$, and hence $u$ is a prefix of $\bm{w}$.  We begin by verifying computationally that $u$ is undirected $(k-1)/(k-2)^+$-free.  This fact will be used several times in the proof.

We first show that $\bm{w}$ contains no reverse $r$-power with $r>(k-1)/(k-2)$.  We verify computationally that there is a finite number $N$ such that every factor of $g(f^\omega(\tt{1}))$ of length $N$ contains the factor $\tt{1231}$.  (While the exact value of $N$ depends on $k$, we have $N\leq 23$ for every $k\in\{4,5,\ldots,21\}$.)  Further, since $\tt{312}$ and $\tt{322}$ are not factors of $g(f^\omega(\tt{1}))$, we conclude by Lemma~\ref{ReversibleFactors} that no factor of $\bm{w}$ of length $N+k$ is reversible.  Thus, if $xy\rev{x}$ is a factor of $\bm{w}$ with $|xy\rev{x}|/|xy|>(k-1)/(k-2)$, then $|x|\leq N+k-1$.  In turn, we have $|xy\rev{x}|< (k-1)(N+k-1)$.  We verify computationally that every factor of $t(\bm{w})$ of length less than $(k-1)(N+k-1)$ appears in $t(u)$, and hence some permutation of every factor of $\bm{w}$ appears in $u$.  Since $u$ is undirected $(k-1)/(k-2)^+$-free, we conclude that $\bm{w}$ contains no reverse $r$-power with $r>(k-1)/(k-2)$.

It remains to show that $\bm{w}$ is ordinary $(k-1)/(k-2)^+$-free.  Suppose to the contrary that $v=pe$ is a factor of $\bm{w}$ such that $e$ is a prefix of $pe$ and $|pe|/|p|>(k-1)/(k-2)$.  We may assume that $pe$ is maximal with respect to having period $|p|$.  If $e$ has less than $k-1$ distinct letters, then $|e|\leq k-1$.  In turn, we have $|pe|<(k-1)^2$.  since every factor of $t(\bm{w})$ of length less than $(k-1)^2$ occurs in $t(u)$, and $u$ is undirected $(k-1)/(k-2)^+$-free, we may assume that $e$ has at least $k-1$ distinct letters.  In this case, let $V=t(pe)$, and let $P$ be the length $|p|$ prefix of $V$.  So $V=PE$, where $E$ is a prefix of $P$, and $P$ is in the kernel of $\sigma$.  Evidently, we have $|e|\leq |E|+k$.  If $E$ does not contain a cut, then $|E|< r_g$, and hence $|e|< r_g+k$.  It follows that $|pe|< (k-1)(r_g+k-1)$.  Since every factor of $t(\bm{w})$ of length less than $(k-1)(r_g+k-1)$ occurs in $t(u)$, and $u$ is undirected $(k-1)/(k-2)^+$-free, we may assume that $E$ contains a cut.

Since $E$ contains a cut, by the discussion immediately preceding this proof, we can find a factor $\pi_s\eta_s$ of $f^\omega(\tt{1})$ such that $\eta_s$ is a prefix of $\pi_s\eta_s$, the word $\pi_s$ is in the kernel of $\tau$, and $\eta_s$ does not contain a cut.  Then we have
\begin{align*}
    \frac{1}{k-2}&< \frac{|e|}{|p|}\\
    &\leq \frac{|E|+k}{|P|}\\
    &\leq \frac{r_g|\eta_0|+|\chi_g|+k}{r_g|\pi_0|}\\
    &=\frac{r_g\left(r^s|\eta_s|+|\chi_f|\cdot \frac{r^s-1}{r-1}\right)+|\chi_g|+k}{r_gr^s|\pi_s|}\\
    &=\frac{1}{|\pi_s|}\left[|\eta_s|+\frac{|\chi_f|}{r-1}\cdot \frac{r^s-1}{r^s}+\frac{|\chi_g|+k}{r_gr^s}\right]\\
    &\leq\frac{1}{|\pi_s|}\left[|\eta_s|+\frac{|\chi_f|}{r-1}+\frac{|\chi_g|+k}{r_gr^s}\right]
\end{align*}
Thus, we have
\begin{align}\label{General}
|\pi_s|<(k-2)\left[|\eta_s|+\frac{|\chi_f|}{r-1}+\frac{|\chi_g|+k}{r_g}\right].
\end{align}
Note also that if $s\geq 1$, then we have
\begin{align}\label{NotZero}
|\pi_s|<(k-2)\left[|\eta_s|+\frac{|\chi_f|}{r-1}+\frac{|\chi_g|+k}{r_gr}\right].
\end{align}

Since every factor of length $r$ in $f^\omega(\tt{1})$ contains a cut, we must have $|\eta_s|\leq r-1$.  Putting this together with~(\ref{General}), we find that
\[
|\pi_s\eta_s|\leq (k-2)\left[r-1+\frac{|\chi_f|}{r-1}+\frac{|\chi_g|+k}{r_gr}\right]+r-1.
\]
This is a constant bound on $|\pi_s\eta_s|$, which enables us to verify by computer that $\pi_s\eta_s$ must appear in $f^3(\tt{1})$ if $k\neq 18$, and in $f^4(\tt{1})$ if $k=18$.  So we complete a search of $f^3(\tt{1})$ (or $f^4(\tt{1})$ if $k=18$) to find all possible words $\pi_s\eta_s$ such that $\eta_s$ is a prefix of $\pi_s$,  the word $\pi_s$ is in the kernel of $\tau$, and the inequalities $|\eta_s|\leq r-1$ and (\ref{General}) are satisfied. 

First of all, if $k\geq 6$, then we find that no such word $\pi_s\eta_s$ exists, and we conclude immediately that $\bm{w}$ is undirected $(k-1)/(k-2)^+$-free.

If $k=4$, then we find only the following possibilities:
\begin{itemize}
\item $\pi_s=\tt{111}$ and $\eta_s=\varepsilon$;
\item $\pi_s=\tt{112112}$ and $\eta_s=\tt{1}$;
\item $\pi_s=\tt{121121}$ and $\eta_s=\tt{1}$.
\end{itemize}
For each of these possibilities, we see that the stricter inequality~(\ref{NotZero}) is not satisfied, and hence we must have $s=0$.  But this is impossible, because then $pe$ is in the prefix $u$ of $\bm{w}$ encoded by $g(f^3(\tt{1}))$, which we have already verified to be undirected $(k-1)/(k-2)^+$-free.  (Note that one can also recover $pe$ from $\pi_s\eta_s=\pi_0\eta_0$ in each case and verify by inspection that $pe$ has exponent at most $(k-1)/(k-2)$; these candidates for $\pi_s\eta_s$ arise from our search only because of slack in some of the inequalities leading to~(\ref{General}).)

If $k=5$, then we find only one possibility, namely $\pi_s=\tt{1212112121}$ and $\eta_s=\tt{1}$.  Again, we see that~(\ref{NotZero}) is not satisfied, and hence we must have $s=0$.  But as for the case $k=4$, this is impossible.

Therefore, we conclude in all cases that $\bm{w}$ is undirected $(k-1)/(k-2)^+$-free.  
\end{proof}

\section{Undirected pattern avoidance}\label{Patterns}

%Let $V$ be a set of letters called variables, and let $\Sigma$ be a finite alphabet.  A word $w\in \Sigma^*$ \emph{encounters} word $p\in V^*$ if $f(p)$ is a factor of $w$ for some non-erasing morphism $f:V^*\rightarrow \Sigma^*$.  Otherwise, $w$ \emph{avoids} $p$.  In this context, $p$ is called a \emph{pattern}.  A pattern $p$ is called \emph{$k$-avoidable} if there are infinitely many words that avoid $p$ on some fixed alphabet of size $k$, and $k$-unavoidable otherwise.  A pattern $p$ is \emph{avoidable} if it is $k$-avoidable for some finite $k$, and \emph{unavoidable} otherwise.
%While there is a nice characterization of avoidable patterns~\cite{BEM1979,Zimin1984}, questions concerning $k$-avoidability are less well understood.  Given pattern $p$ and integer $k\geq 2$, it is unknown whether the $k$-avoidability of $p$ is decidable.  The study of $\alpha$-free and Abelian $\alpha$-free words has implications on the $k$-avoidability of patterns.  For example, it is straightforward to show that any $3/2$-free word $w$ avoids the pattern $p=abacbab$.  Since $\RT(4)=\tfrac{7}{5}$ and $7/5<3/2$, the pattern $p$ is $4$-avoidable.  As another example, since Abelian squares are $4$-avoidable~\cite{Keranen1992}, the pattern $abcacb$ is $4$-avoidable.

Let $V$ be an alphabet of letters called \emph{variables}, and let $p=p_1p_2\cdots p_n$ be a word with $p_i\in V$.  In this context, the word $p$ is called a \emph{pattern}.  If $\sim$ is an equivalence relation on words, then we say that the word $w$ \emph{encounters $p$ up to $\sim$} if $w$ contains a factor of the form $X_1X_2\cdots X_n$, where each word $X_i$ is nonempty and $X_i\sim X_j$ whenever $p_i=p_j$.  The word $X_1X_2\cdots X_n$ is called an \emph{instance} of $p$ up to $\sim$.  If $w$ contains no instance of $p$ up to $\sim$, then we say that $w$ \emph{avoids $p$ up to $\sim$}.  A pattern $p$ is \emph{$k$-avoidable up to $\sim$} if there is an infinite word on a $k$-letter alphabet that avoids $p$ up to $\sim$.  Otherwise, the pattern $p$ is \emph{$k$-unavoidable up to $\sim$}.  Finally, the pattern $p$ is \emph{avoidable up to $\sim$} if it is $k$-avoidable for some $k$, and \emph{unavoidable up to $\sim$} otherwise.

When $\sim$ is equality, we recover the ordinary notion of \emph{pattern avoidance} (see~\cite{CassaigneChapter,Gamard2018,OchemRosenfeld2020}, for example).  When $\sim$ is $\approx$ (i.e., ``is an anagram of''), we recover the notion of \emph{Abelian pattern avoidance} (see~\cite{CurrieLinek2001,CurrieVisentin2008,Rosenfeld2016}, for example).  In this section, we consider pattern avoidance up to $\simeq$, or \emph{undirected pattern avoidance}.  

While there are patterns that are avoidable in the ordinary sense but not in the Abelian sense~\cite[Lemma 3]{CurrieLinek2001}, every pattern that is avoidable in the ordinary sense must also be avoidable up to $\simeq$, as we show below.  For words $u=u_0u_1\cdots$ and $v=v_0v_1\cdots$ of the same length (possibly infinite) over alphabets $A$ and $B$, respectively, the \textit{direct product} of $u$ and $v,$ denoted $u\otimes v,$ is the word on alphabet $A\times B$ defined by
\[
u\otimes v=(u_0,v_0)(u_1,v_1)\cdots.
\]

\begin{theorem}\label{Avoidable}
Let $p$ be a pattern.  Then $p$ is avoidable in the ordinary sense if and only if $p$ is avoidable up to $\simeq$.
\end{theorem}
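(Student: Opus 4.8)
The plan is to establish the two implications separately. One direction is immediate: since $X_i=X_j$ implies $X_i\simeq X_j$, every instance of $p$ in the ordinary sense is also an instance of $p$ up to $\simeq$; hence any infinite word that avoids $p$ up to $\simeq$ also avoids $p$ in the ordinary sense, so if $p$ is avoidable up to $\simeq$ then it is avoidable in the ordinary sense. The content of the theorem is the converse, and here I would use the direct product construction suggested by the definition of $\otimes$ introduced just above.

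For the converse, suppose $p$ is avoidable in the ordinary sense, and fix an infinite word $u$ over a finite alphabet $A$ that avoids $p$. Let $v=(\tt{123})^\omega\in\Sigma_3^\omega$, and put $w=u\otimes v$, an infinite word over the finite alphabet $A\times\Sigma_3$. The key property of $v$ is that its set of length-$2$ factors is $\{\tt{12},\tt{23},\tt{31}\}$, which is disjoint from its image under reversal; consequently $\rev{s}$ is not a factor of $v$ whenever $s$ is a factor of $v$ with $|s|\geq 2$. I claim $w$ avoids $p$ up to $\simeq$. Suppose not, and let $X_1X_2\cdots X_n$ be an instance of $p=p_1\cdots p_n$ up to $\simeq$ occurring in $w$. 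Writing each $X_i$ as $X_i'\otimes X_i''$ with $X_i'$ a nonempty word over $A$ and $X_i''$ a word over $\Sigma_3$, the concatenation $X_1'X_2'\cdots X_n'$ is a factor of $u$ and each $X_i''$ is a factor of $v$, with $|X_i'|=|X_i''|=|X_i|$. Now fix $i,j$ with $p_i=p_j$; then $X_i\simeq X_j$, and since $\simeq$ respects length, $|X_i|=|X_j|$. If $|X_i|=1$, then $X_i=X_j$ since a one-letter word is its own reversal. If $|X_i|\geq 2$ and $X_i\neq X_j$, then $X_i=\rev{X_j}$, so $X_i''=\rev{X_j''}$; but $X_i''$ and $X_j''$ are then both factors of $v$ of length $\geq 2$, contradicting the key property of $v$. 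Hence $X_i=X_j$, and in particular $X_i'=X_j'$, whenever $p_i=p_j$. Therefore $X_1'X_2'\cdots X_n'$ is an instance of $p$ in the ordinary sense occurring in $u$, a contradiction. So $w$ avoids $p$ up to $\simeq$, and $p$ is avoidable up to $\simeq$.

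The crux is the reversal ambiguity in the definition of an instance up to $\simeq$: the blocks of a $\simeq$-instance of $p$ need only be equal or reversals of one another, and the role of the factor $(\tt{123})^\omega$ in the second coordinate is precisely to orient all blocks of length at least $2$, so that the $\simeq$-relation among such blocks collapses to genuine equality; blocks of length $1$ are palindromes and cause no difficulty. I expect the only point requiring a little care is this split into the length-$1$ and length-$\geq 2$ cases, together with the (easy) verification that $(\tt{123})^\omega$ really has no factor of length $\geq 2$ whose reversal is also a factor; the remainder is routine bookkeeping about projections of the direct product.
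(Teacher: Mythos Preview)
Your proof is correct and follows essentially the same approach as the paper: both take the direct product $u\otimes(\tt{123})^\omega$ and use the fact that $(\tt{123})^\omega$ has no reversible factors of length $\geq 2$ to force any $\simeq$-instance of $p$ to be an ordinary instance. Your write-up is a bit more explicit about projecting to each coordinate and splitting into the length-$1$ and length-$\geq 2$ cases, but the idea and the construction are the same.
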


\begin{proof}
If $p$ is unavoidable in the ordinary sense, then clearly $p$ is unavoidable up to $\simeq.$  So suppose that $p$ is avoidable in the ordinary sense, and let $\bm{u}$ be an infinite word avoiding $p$.  We claim that the direct product $\bm{u}\otimes (\tt{123})^\omega$ avoids $p$ up to $\simeq$.  Write $p=p_1p_2\cdots p_n$, where the $p_i$ are variables.  Suppose towards a contradiction that $\bm{u}\otimes(\tt{123})^\omega$ contains an instance $P_1P_2\cdots P_n$ of $p$ up to $\simeq$.  Then we have $P_i\simeq P_j$ whenever $p_i=p_j$.  Since the only nonempty reversible factors of $(123)^\omega$ have length $1$, we must in fact have $P_i=P_j$ whenever $p_i=p_j$.  But that means that $\bm{u}\otimes(\tt{123})^\omega$, and hence $\bm{u}$, contains an instance of $p$ in the ordinary sense, which is a contradiction. 
\end{proof}

Questions concerning the $k$-avoidability of patterns up to $\simeq$ appear to be more interesting.  The \emph{avoidability index} of a pattern $p$ up to $\sim$, denoted $\lambda_\sim(p)$, is the least positive integer $k$ such that $p$ is $k$-avoidable up to $\sim$, or $\infty$ if $p$ is unavoidable.  In general, for any pattern $p$, we have
\[
\lambda_{=}(p)\leq \lambda_{\simeq}(p)\leq \lambda_{\approx}(p).
\]
The construction of Theorem~\ref{Avoidable} illustrates that $\lambda_{\simeq}(p)\leq 3\lambda_{=}(p)$, though we suspect that this bound is not tight for any avoidable pattern $p$.

In the remainder of this section, we determine the undirected avoidability index of every pattern on at most two variables.  We begin by finding the avoidability index of the unary patterns up to $\simeq$ using known results.

\begin{theorem}\label{UnaryIndex}
$\lambda_\simeq(x^k)=\begin{cases}
3, &\text{if $k\in\{2,3\}$};\\
2, &\text{if $k\geq 4$}.
\end{cases}$
\end{theorem}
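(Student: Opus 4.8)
The plan is to treat the unary pattern $x^k$ directly in terms of the repetition-threshold results already established, since an instance of $x^k$ up to $\simeq$ is exactly a word $X_1X_2\cdots X_k$ with all $X_i$ of a common length $m$ and each $X_i\in\{X_1,\rev{X_1}\}$. First I would observe that avoiding $x^k$ up to $\simeq$ is equivalent to avoiding a certain family of undirected powers: an instance of $x^k$ has length $km$ and is an undirected $r$-power (in fact a concatenation of $k$ blocks each equivalent to the first), so for $k\geq 2$ it contains in particular an undirected $2$-power, i.e.\ a factor $XX'$ with $X'\in\{X,\rev X\}$ — a square or a ``reverse square'' (an even palindrome-like factor $XX^R$). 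Conversely, any factor of the form $XX^R$ or $XX$ is an instance of $x^2$ up to $\simeq$, and iterating gives instances of $x^k$ for larger $k$.

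For the case $k\geq 4$: the lower bound $\lambda_\simeq(x^k)\geq 2$ is trivial since over a one-letter alphabet every sufficiently long word is $\tt{1}^k$. For the upper bound I would exhibit a binary word avoiding $x^4$ up to $\simeq$; here I would use that $\URT$-type constructions or, more simply, a known binary word avoiding (ordinary) cubes and additionally avoiding factors $XX^R$ of the relevant length. In fact it suffices to find an infinite binary word with no factor that is an instance of $x^4$ up to $\simeq$; since such an instance has a nonempty block $X$ repeated (up to reversal) four times, it contains an ordinary square $XX$ or $X^RX^R$ somewhere among consecutive equal blocks, OR it is $XX^RXX^R$-type. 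I would argue the cleanest route is: take the fixed point of a suitable uniform binary morphism, verify by the standard finite-check technique (bounding the length of a minimal instance, then checking a prefix) that it avoids $x^4$ up to $\simeq$. Over a binary alphabet the only nonempty reversible factors behave well, so the reverse-block cases collapse to ordinary-square conditions on short factors; this makes the check finite and routine.

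For the case $k\in\{2,3\}$: the upper bound $\lambda_\simeq(x^k)\leq 3$ follows from $\lambda_\simeq(x^2)\leq\lambda_\simeq(x^3)\leq\ldots$ is the wrong direction, so instead I would note $\lambda_\simeq(x^2)\geq\lambda_\simeq(x^3)$ need not hold either; rather, since avoiding $x^2$ up to $\simeq$ implies avoiding $x^3$ up to $\simeq$, it suffices to show $\lambda_\simeq(x^2)\leq 3$ and $\lambda_\simeq(x^3)\geq 3$. The bound $\lambda_\simeq(x^2)\leq 3$: an instance of $x^2$ up to $\simeq$ is a factor $XX$ or $XX^R$; a word avoiding both is ``undirected squarefree'', i.e.\ undirected $2$-free, and by Theorem~\ref{thm:URT3} (or the fact that $\URT(3)=7/4<2$) there is an infinite ternary undirected $2$-free word. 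For the lower bounds $\lambda_\simeq(x^2)\geq 3$ and $\lambda_\simeq(x^3)\geq 3$: over two letters, $\{1,2\}$, I claim every sufficiently long binary word contains a factor $XX^R$ (hence an instance of $x^2$, a fortiori relevant for $x^2$; for $x^3$ one needs $XX'X''$) — concretely, the binary word $\tt{112}$ already contains $\tt{11}=XX$, and more to the point one checks by a short backtracking argument that there is no infinite binary word avoiding $x^3$ up to $\simeq$, because binary words of length $\geq 5$ or so are forced to contain a cube or a factor like $XX^RX$ or $XXX^R$; a finite case analysis settles this. I expect the main obstacle to be the lower bound $\lambda_\simeq(x^3)\geq 3$ — ruling out an infinite binary word avoiding $x^3$ up to $\simeq$ — since one must carefully enumerate which binary words avoid all three ``shapes'' $XXX$, $XXX^R$, $XX^RX$ (and note $XX^RX^R$, $X^RX^RX$, etc.\ reduce to these); but this is a bounded backtracking computation of the same flavor as Theorem~\ref{LowerBoundBacktrack}, so it should terminate quickly, and indeed the known fact that binary ternary-square-free words do not exist together with a short extra check for the reverse-block shapes should suffice.
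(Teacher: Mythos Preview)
Your outline is broadly correct, but you are working much harder than necessary and leaving the crucial step for $k\geq 4$ as an unexecuted plan rather than a proof. The paper's argument is almost entirely a matter of sandwiching $\lambda_{\simeq}$ between $\lambda_{=}$ and $\lambda_{\approx}$ and invoking known results.

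For $k\geq 4$, you propose to search for a binary morphism and verify it by a finite check. This would work in principle, but you never produce the morphism, and your attempted reduction (``an instance of $x^4$ up to $\simeq$ contains an ordinary square'') is true but useless over a binary alphabet, where every word of length $4$ already contains a square. The paper simply observes that $\lambda_{\simeq}(x^4)\leq\lambda_{\approx}(x^4)=2$ by Dekking's 1979 result that Abelian fourth powers are avoidable on two letters; since $\simeq$ is finer than $\approx$, any Abelian-$x^4$-free word is automatically undirected-$x^4$-free. This one line replaces your entire construction.

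For $k=3$, your upper bound via $\lambda_{\simeq}(x^3)\leq\lambda_{\simeq}(x^2)\leq 3$ is fine (the paper instead cites $\lambda_{\approx}(x^3)=3$, again Dekking). For the lower bound $\lambda_{\simeq}(x^3)\geq 3$ both you and the paper defer to a finite backtrack; the paper reports that the longest binary word avoiding $x^3$ up to $\simeq$ has length~$9$.

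For $k=2$, you and the paper agree on the upper bound via $\URT(3)=7/4$. But you miss a pleasant observation that the paper highlights: for any nonempty $x$, the word $x\rev{x}$ has a length-$2$ ordinary square at its centre, so a word is undirected-square-free if and only if it is square-free in the ordinary sense. This gives $\lambda_{\simeq}(xx)=\lambda_{=}(xx)=3$ immediately, without appealing to the undirected repetition threshold at all.
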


\begin{proof}
The fact that $\lambda_{\simeq}(xx)\leq 3$ follows from Theorem~\ref{thm:URT3}.  Alternatively, note that for every nonempty word $x$, the word $x\rev{x}$ contains a length $2$ square at its center, so in fact every square-free word avoids $xx$ in the undirected sense.  So $\lambda_{\simeq}(xx)=\lambda_{=}(xx)=3$.  

Backtracking by computer, one finds that the longest binary word avoiding $xxx$ in the undirected sense has length $9$, so that $\lambda_{\simeq}(xxx)\geq 3$.  Since $\lambda_{\approx}(xxx)=3$~\cite{Dekking1979}, we conclude that $\lambda_{\simeq}(xxx)=3$.  

Finally, since $\lambda_{\approx}\left(x^4\right)=2$~\cite{Dekking1979}, we have $\lambda_{\simeq}\left(x^k\right)=2$ for all $k\geq 4$. 
\end{proof}

We now determine the undirected avoidability index of every binary pattern.  We consider patterns on the variables $x$ and $y$, and we say that two patterns $p$ and $q$ are \emph{equivalent} if $q\in\{p,\overline{p},\rev{p},\rev{\overline{p}}\}$, where $\overline{p}$ is the image of $p$ under the morphism defined by $x\mapsto y$ and $y\mapsto x$.  For patterns $p$ and $q$, if $p$ is equivalent to a factor of $q$, then we must have $\lambda_\simeq(q)\leq \lambda_\simeq(p)$.  We first state a lemma which gives the undirected avoidability index of some binary patterns.

\begin{Lemma}\label{AvoidabilityLemma} Let $P$ denote the set of patterns
\begin{align*}
\{xxyyx, xyxyx, xxxyxy, xxxyyx, xxxyyy, xxyxxy,\\ xxyxyy, xxyyyx, xyxxxy, xyxxyx, xyxyyx\}.
\end{align*}
We have
\begin{enumerate}[label=\textnormal{(\alph*)}]
\item $\lambda_{\simeq}(p)=2$ for every $p\in P$, and
\item $\lambda_\simeq(xyxy)=3$.
\end{enumerate}
\end{Lemma}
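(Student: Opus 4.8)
The plan is to treat the two parts by different means. For part (b), I would first establish the lower bound $\lambda_\simeq(xyxy)\geq 3$ by a finite backtracking search: every binary word of length beyond some small bound contains an undirected instance of $xyxy$, i.e.\ a factor $X_1X_2X_3X_4$ with $X_1\simeq X_3$ and $X_2\simeq X_4$. Since we are over a two-letter alphabet, a word avoiding $xyxy$ up to $\simeq$ must in particular avoid $xxxx$ (take $X_i$ single letters), and more generally the constraints are extremely tight; the search tree is finite and can be exhausted by hand or computer, yielding some explicit longest binary word. This shows $\lambda_\simeq(xyxy)\geq 3$. For the matching upper bound, I would exhibit an infinite word on $3$ letters avoiding $xyxy$ up to $\simeq$; the natural candidate is a morphic word, and one can try the direct-product trick of Theorem~\ref{Avoidable} applied to a ternary word avoiding $xyxy$ in the ordinary sense, or directly verify a small uniform morphism on $\Sigma_3$ whose fixed point avoids $xyxy$ up to $\simeq$ (checking, as in the proof of Theorem~\ref{thm:URT3}, that long reversible factors are absent and then that no ordinary instance occurs). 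Either way the upper bound follows, giving $\lambda_\simeq(xyxy)=3$.

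For part (a), the claim is that each of the eleven patterns in $P$ is avoidable up to $\simeq$ on a binary alphabet. I would prove this by a single morphic construction: find a morphism $h\colon\Sigma_2^*\to\Sigma_2^*$ (ideally one morphism that works for all of $P$ simultaneously, or a small number of morphisms covering the list) such that $h^\omega(\tt{1})$ avoids each $p\in P$ up to $\simeq$. The verification for each pattern splits, as in the other proofs in this paper, into bounding the length of reversible factors of $h^\omega(\tt{1})$ and then checking that no ordinary instance of $p$ occurs. Concretely: each pattern in $P$ has length $5$ or $6$ and contains at least three consecutive equal variables or a dense repetition structure, so an instance $X_1\cdots X_n$ up to $\simeq$ forces several of the blocks $X_i$ to be equal or reversals of one another. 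Using a lemma bounding the longest reversible factor of $h^\omega(\tt{1})$ (analogous to Lemma~\ref{ReversibleFactors}, or just by direct finite check of the morphic word), one reduces an undirected instance to an ordinary instance or to one with bounded block lengths, and the latter is disposed of by a finite search of a suitable prefix $h^N(\tt{1})$.

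The main obstacle I anticipate is the finite-check bookkeeping in part (a): because $\simeq$ allows each block $X_i$ to be either $X_j$ or $\rev{X_j}$, the number of cases for a putative instance grows, and one must argue that the relevant blocks are forced to be short unless they are genuinely equal (so that ordinary avoidability of $p$ by $h^\omega(\tt{1})$ kicks in). The cleanest route is to choose $h$ so that $h^\omega(\tt{1})$ has no reversible factor longer than some small constant; then in any undirected instance $X_1\cdots X_n$, any block repeated at least three times (which each pattern in $P$ guarantees for some variable, possibly after noting overlaps) must be short, collapsing the problem to a bounded search. I would not expect the computations themselves to be subtle, only voluminous, and they parallel the verifications already carried out in Sections~\ref{URT3} and~\ref{URT4}.
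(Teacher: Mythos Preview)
Your overall shape is right, but there are two concrete gaps worth flagging.

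For part (b), the direct-product trick from Theorem~\ref{Avoidable} does not give $\lambda_{\simeq}(xyxy)\leq 3$: tensoring a binary word avoiding $xyxy$ with $(\tt{123})^\omega$ produces a word on $6$ letters, not $3$. You do hedge toward a direct ternary morphic construction, and that is indeed what the paper does (via morphisms $f_{xyxy}$ on $\Sigma_4$ and $g_{xyxy}:\Sigma_4^*\to\Sigma_3^*$), but the shortcut you offer first does not work.

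For part (a), two issues. First, the hope of a single binary morphism handling all eleven patterns is almost certainly too optimistic; the paper uses a separate construction for each $p\in P$, and for $xxyyx$ and $xyxyx$ the constructions pass through intermediate alphabets of size $4$ and $6$ with rather long blocks. Second, and more substantively, your verification scheme has a hole. Bounding the reversible factors of $\bm{w}_p$ lets you conclude that in any \emph{non-ordinary} undirected instance, at least one variable is replaced by a nonpalindromic reversible factor, hence by a word of bounded length. But the \emph{other} variable need not be short: its occurrences may all carry the same orientation, so no reversibility bound applies. Thus the reduction is not to a finite search in a prefix, but to avoidance (in $\bm{w}_p$) of a finite list of patterns \emph{with constants} in one remaining variable. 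That is exactly what the paper does, invoking Cassaigne's algorithm for HD0L languages both for the ordinary pattern $p$ and for each pattern-with-constants obtained by freezing one variable to a specific short reversible factor (or its reversal). Without that decidability tool, your ``finite search of a suitable prefix $h^N(\tt{1})$'' does not close the argument.
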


Before proving Lemma~\ref{AvoidabilityLemma}, we use it to determine the undirected avoidability index of every binary pattern.

\begin{theorem}
Let $p$ be a pattern on variables $x$ and $y$.  
\begin{enumerate}[label=\textnormal{(\alph*)}]
\item If $p$ is equivalent to a factor of $xyx$, then $\lambda_{\simeq}(p)=\infty$.
\item If $p$ has a factor equivalent to $xxxx$ or some pattern in $P$, then $\lambda_{\simeq}(p)=2$.
\item Otherwise, we have $\lambda_{\simeq}(p)=3$.
\end{enumerate}
\end{theorem}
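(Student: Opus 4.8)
The theorem classifies all binary patterns into three categories, and the proof is essentially a careful bookkeeping argument over all patterns on $\{x,y\}$, leaning on Lemma~\ref{AvoidabilityLemma} and Theorem~\ref{UnaryIndex} for the hard cases. First I would dispatch part (a): a pattern equivalent to a factor of $xyx$ is one of $x$, $xy$, $xyx$ (up to equivalence), and each of these is unavoidable even in the ordinary sense (a nonempty word of length $\geq 2$ on any alphabet encounters $xy$, etc.), hence unavoidable up to $\simeq$ by the easy direction of Theorem~\ref{Avoidable}; so $\lambda_\simeq(p)=\infty$. For part (b), if $p$ has a factor equivalent to $xxxx$, then since $\lambda_\simeq(x^4)=2$ by Theorem~\ref{UnaryIndex}, and avoidability up to $\simeq$ is inherited by patterns containing an equivalent factor (the monotonicity remark $\lambda_\simeq(q)\leq\lambda_\simeq(p)$ when $p$ is equivalent to a factor of $q$), we get $\lambda_\simeq(p)\leq 2$; the reverse inequality $\lambda_\simeq(p)\geq 2$ is trivial since no pattern with a repeated variable is $1$-avoidable. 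Likewise, if $p$ has a factor equivalent to some pattern in $P$, Lemma~\ref{AvoidabilityLemma}(a) gives $\lambda_\simeq(p)=2$.

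The substance is part (c): for every binary pattern $p$ that is \emph{not} covered by (a) or (b), we must show $\lambda_\simeq(p)=3$. The lower bound $\lambda_\simeq(p)\geq 3$ should follow from the observation that such a $p$ must have a factor equivalent to $xyxy$ (or $xx$): indeed, the hypothesis of (c) means $p$ is not a factor of $xyx$, so $p$ has length $\geq 4$ or contains $xx$; if $p$ avoids $xx$ and $yy$ as factors it is an alternating word, and an alternating binary word of length $\geq 4$ contains $xyxy$ or $yxyx$ — both equivalent to $xyxy$ — and if $p$ contains $xx$ but none of $xxxx$ or the patterns of $P$ as a factor, one checks (finite case analysis on short patterns) that $p$ still must contain a factor equivalent to $xyxy$ or to $xx$ while being avoidable, so $\lambda_\simeq(p)\geq\lambda_\simeq(xyxy)=3$ by Lemma~\ref{AvoidabilityLemma}(b), or $\lambda_\simeq(p)\geq \lambda_\simeq(xx)=3$ by Theorem~\ref{UnaryIndex}. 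For the upper bound $\lambda_\simeq(p)\leq 3$: every pattern of length $\geq 2$ on two variables that is not a factor of $xyx$ is a "long enough" binary pattern, and every binary pattern with at least three occurrences of some variable, or length at least $4$, is avoidable in the ordinary sense on $3$ letters (this is classical — the only unavoidable binary patterns are factors of $xyx$, and the avoidability index of avoidable binary patterns is at most $3$, with the doubled patterns being the worst case); combined with $\lambda_\simeq\leq\lambda_=$ for patterns... wait, that inequality goes the wrong way. The correct route for the upper bound is: show directly that each remaining $p$ (there are only finitely many "minimal" ones to check, the rest containing one of these as an equivalent factor) is $3$-avoidable up to $\simeq$, either by exhibiting an explicit morphic word or by invoking the ternary Dejean word $f^\omega(\tt 1)$ of Theorem~\ref{thm:URT3}, which being undirected $\tfrac74^+$-free certainly avoids every sufficiently "long" binary pattern up to $\simeq$ — in particular it avoids $xyxy$ up to $\simeq$ since an instance of $xyxy$ up to $\simeq$ with all blocks of length $\ell$ is an undirected power of exponent $\geq 2 > \tfrac74$. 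More precisely, any binary pattern not in the scope of (a) and not having a factor in $\{xxxx\}\cup P$ is, up to equivalence, a factor of (a short list of) patterns each of which is encountered up to $\simeq$ by any word containing an undirected square, so that $f^\omega(\tt 1)$ avoids it on $3$ letters.

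\textbf{Main obstacle.} The delicate part is verifying the \emph{exhaustiveness} of the case split: one must enumerate, up to the equivalence $q\mapsto\{q,\overline q,\rev q,\rev{\overline q}\}$, all binary patterns (which is an infinite list, but organized by length, stabilizing quickly since once a pattern contains a minimal avoidable "witness" factor the classification is forced), and check that each falls into exactly one of the three buckets — i.e., that the set $\{xxxx\}\cup P$ together with "factor of $xyx$" and "has an equivalent factor $xyxy$ or $xx$ but nothing stronger" genuinely partitions the binary patterns in the claimed way. This is a finite computation (patterns of length $\leq 6$ suffice, since every longer binary pattern contains a length-$6$ factor, and all length-$6$ binary patterns either are covered by $P$/$xxxx$ or reduce to shorter cases), but it is fiddly and error-prone, and I expect the author handles it by a computer-assisted or tabular enumeration; I would organize my proof around a lemma asserting this combinatorial partition and then feed it mechanically into parts (a)--(c). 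I would also double-check the edge cases $xxy$, $xyy$, $xxyx$, $xyxx$, $xyyx$, etc. (short patterns with a doubled variable but no factor in $P$) land correctly in bucket (c) with value exactly $3$, both bounds.
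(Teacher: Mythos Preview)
Your handling of parts (a) and (b) is fine and matches the paper. The problem is in part (c): you have the monotonicity inequality backwards. If $q$ is (equivalent to) a factor of $p$, then any word avoiding $q$ up to $\simeq$ also avoids $p$ up to $\simeq$, so $\lambda_{\simeq}(p)\le\lambda_{\simeq}(q)$, not $\ge$. Hence the observation that a pattern $p$ in case (c) must contain a factor equivalent to $xx$ or $xyxy$ yields the \emph{upper} bound $\lambda_{\simeq}(p)\le 3$ (via Theorem~\ref{UnaryIndex} and Lemma~\ref{AvoidabilityLemma}(b)), which is exactly how the paper gets it in one line. Your proposed ``lower bound'' paragraph therefore proves the upper bound, and your ``upper bound'' paragraph is trying to do work that is already done.

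What is genuinely missing from your sketch is the actual lower bound $\lambda_{\simeq}(p)\ge 3$, i.e., that every such $p$ is $2$-\emph{un}avoidable up to $\simeq$. The paper's argument is the one you never state: since $p$ has no factor equivalent to $xxxx$ or to any member of $P$, a backtrack over $\{x,y\}$ shows there are only finitely many such $p$ up to equivalence, and each is (equivalent to) a factor of one of five explicit patterns ($xxxyy$, $xxyxy$, $xyxxy$, $xyyyx$, $xxxyxxx$); a finite computer search then shows each of these five is $2$-unavoidable up to $\simeq$, and monotonicity (now in the correct direction) pushes that down to every factor. Your plan to use the ternary Dejean-type word $f^\omega(\tt 1)$ does not help here, both because that would again address $3$-avoidability rather than $2$-unavoidability, and because your side claim that an undirected $\tfrac74^+$-free word avoids $xyxy$ up to $\simeq$ is false: from $X'\simeq X$ and $Y'\simeq Y$ one cannot conclude $X'Y'\in\{XY,\rev{(XY)}\}$, so $XYX'Y'$ need not be an undirected power at all.
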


\begin{proof}
For (a), if $p$ is equivalent to a factor of $xyx$, then $p$ is unavoidable in the ordinary sense, and hence unavoidable in the undirected sense by Theorem~\ref{Avoidable}.  Part (b) follows from Theorem~\ref{UnaryIndex} and Lemma~\ref{AvoidabilityLemma}(a).  Finally, for (c), since $p$ is not equivalent to a factor of $xyx$, it must be the case that $p$ has a factor equivalent to either $xx$ or $xyxy$.  By Theorem~\ref{UnaryIndex} and Lemma~\ref{AvoidabilityLemma}(b), it follows that $\lambda_{\simeq}(p)\leq 3$.  Further, since no factor of $p$ is equivalent to a member of $P$, backtracking on the alphabet $\{x,y\}$ reveals that there are only finitely many possibilities for $p$.  Every such pattern $p$ is equivalent to a factor of some pattern in Table~\ref{BacktrackingTable}.  It follows that $\lambda_{\simeq}(p)\geq 3$. 
\end{proof}

\begin{table}
\centering
\begin{tabular}{c | c}
Pattern $q$ & Length of longest binary word avoiding $q$ up to $\simeq$\\\hline
$xxxyy$ & 33\\
$xxyxy$ & 30\\
$xyxxy$ & 96\\
$xyyyx$ & 39\\
$xxxyxxx$ & 62\\
\end{tabular}
\caption{Backtracking results for some binary patterns}\label{BacktrackingTable}
\end{table}

\begin{proof}[Proof of Lemma~\ref{AvoidabilityLemma}]
For part (a), for every $p\in P$, we give a binary morphic word $\bm{w}_p$ that avoids $p$ in the undirected sense.  For part (b), we have $\lambda_{\simeq}(xyxy)\geq 3$ by Table~\ref{BacktrackingTable}, since $xyxy$ is a factor of $xxyxy$, and we give a ternary morphic word $\bm{w}_{xyxy}$ that avoids $xyxy$ in the undirected sense.

For every $p\in P\cup\{xyxy\}$, the morphism(s) used to construct $\bm{w}_p$ are given immediately after the proof.  We describe how to verify that $\bm{w}_p$ avoids $p$.  The key tool is Cassaigne's algorithm~\cite{Cassaigne1994}.
%, which determines whether a given circular HD0L-language avoids a pattern (possibly with constants).  See~\cite{Cassaigne1994} for details.  %We implemented Cassaigne's algorithm in python.

First, we verify that $\bm{w}_p$ avoids $p$ in the ordinary sense using Cassaigne's algorithm~\cite{Cassaigne1994}.  Thus, if $\bm{w}_p$ contains an undirected instance of $p$, then in this instance, at least one of the variables $x$ or $y$ must have been replaced by a nonpalindromic reversible factor of $\bm{w}_p$.  By an exhaustive search, $\bm{w}_p$ has only finitely many reversible factors.  For each nonpalindromic reversible factor $u$ of $\bm{w}_p$, we once again use Cassaigne's algorithm to verify that $\bm{w}_p$ avoids all patterns with constants obtained from $p$ by replacing every appearance of $x$ (or $y$) with either $u$ or $\rev{u}$.  Finally, for every pair of nonpalindromic reversible factors $u$ and $v$ of $\bm{w}_p$, there are finitely many factors obtained from $p$ by replacing every appearance of $x$ with either $u$ or $\rev{u}$, and every appearance of $y$ with either $v$ or $\rev{v}$.  We verify that none of these factors appear in $\bm{w}_p$.
\end{proof}

For $p\in \{xxxyyx, xxxyyy, xxyxyy, xxyyyx, xyxyyx\},$ define $\bm{w}_p=f_p^\omega(\tt{0})$, where the morphism $f_p:\Sigma_2^*\rightarrow\Sigma_2^*$ is defined as follows: 
\begin{multicols}{2}
\noindent
\begin{align*}
f_{xxxyyx}(\tt{1})&=\tt{1112}\\
f_{xxxyyx}(\tt{2})&=\tt{1222}\\[5pt]
f_{xxxyyy}(\tt{1})&=	\tt{12112}\\
f_{xxxyyy}(\tt{2})&=\tt{21221}\\[5pt]
f_{xxyxyy}(\tt{1})&=	\tt{1211221}\\
f_{xxyxyy}(\tt{2})&=\tt{2122112}
\end{align*}
\begin{align*}
f_{xxyyyx}(\tt{1})&=	\tt{112212211212}\\
f_{xxyyyx}(\tt{2})&=\tt{221121122121}\\[5pt]
f_{xyxyyx}(\tt{1})&=	\tt{11122212}\\
f_{xyxyyx}(\tt{2})&=\tt{11122212}.
\end{align*}
\end{multicols}

\noindent
For $p\in \{xxxyxy, xxyxxy, xyxxxy, xyxxyx\},$ define $\bm{w}_p=g_p(f^\omega(\tt{0}))$, where the morphism $f:\Sigma_3^*\rightarrow \Sigma_3^*$ is the well-known morphism defined by
\begin{align*}
f(\tt{1})&= \tt{123}\\
f(\tt{2})&= \tt{13}\\
f(\tt{3})&= \tt{2},
\end{align*}
and the morphism $g_p:\Sigma_3^*\rightarrow\Sigma_2^*$ is defined as follows:
\begin{multicols}{2}
\noindent
\begin{align*}
g_{xxxyxy}(\tt{1})&=\tt{11}\\
g_{xxxyxy}(\tt{2})&=\tt{12212}\\
g_{xxxyxy}(\tt{3})&=\tt{12221}\\[5pt]
g_{xxyxxy}(\tt{1})&=\tt{111}\\
g_{xxyxxy}(\tt{2})&=\tt{112}\\
g_{xxyxxy}(\tt{3})&=\tt{222}
\end{align*}
\begin{align*}
g_{xyxxxy}(\tt{1})&=\tt{1121}\\
g_{xyxxxy}(\tt{2})&=\tt{2111}\\
g_{xyxxxy}(\tt{3})&=\tt{122212}\\[5pt]
g_{xyxxyx}(\tt{1})&=\tt{12}\\
g_{xyxxyx}(\tt{2})&=\tt{11}\\
g_{xyxxyx}(\tt{3})&=\tt{222}.
\end{align*}
\end{multicols}

\noindent
For $p$ in $\{xyxy, xxyyx, xyxyx\}$, define $\bm{w}_p=g_p(f_p^\omega(\tt{0}))$, where the morphisms $f_{xyxy}:\Sigma_4^*\rightarrow\Sigma_4^*$ and $g_{xyxy}:\Sigma_4^*\rightarrow\Sigma_3^*$ are  defined by
\begin{multicols}{2}
\noindent
\begin{align*}
f_{xyxy}(\tt{1})&=\tt{1324}\\
f_{xyxy}(\tt{2})&=\tt{1423}\\
f_{xyxy}(\tt{3})&=\tt{134}\\
f_{xyxy}(\tt{4})&=\tt{143}
\end{align*}
\begin{align*}
g_{xyxy}(\tt{1})&=\tt{1223}\\
g_{xyxy}(\tt{2})&=\tt{123}\\
g_{xyxy}(\tt{3})&=\tt{11233}\\
g_{xyxy}(\tt{4})&=\tt{112233},
\end{align*}
\end{multicols}
\noindent
the morphisms $f_{xxyyx}:\Sigma_4^*\rightarrow\Sigma_4^*$ and $g_{xxyyx}:\Sigma_4^*\rightarrow\Sigma_2^*$ are defined by
{\footnotesize
\begin{align*}
f_{xxyyx}(\tt{1})&=\tt{123} & g_{xxyyx}(\tt{1})&=\tt{12211122211121221112221211222111}\\
f_{xxyyx}(\tt{2})&=\tt{214} & g_{xxyyx}(\tt{2})&=\tt{21122211122212112221112122111222}\\
f_{xxyyx}(\tt{3})&=\tt{12343} & g_{xxyyx}(\tt{3})&=\tt{122111222111212211122212112221112122111222111}\\
f_{xxyyx}(\tt{4})&=\tt{21434} \ \ \ & g_{xxyyx}(\tt{4})&=\tt{211222111222121122211121221112221211222111222},
\end{align*}}%
\noindent
and the morphisms $f_{xyxyx}:\Sigma_6^*\rightarrow\Sigma_6^*$ and $g_{xyxyx}:\Sigma_6^*\rightarrow\Sigma_2^*$ are defined by
\begin{align*}
f_{xyxyx}(\tt{1})&=\tt{1235} & 
g_{xyxyx}(\tt{1})&=\tt{122111121122221211221111}\\
f_{xyxyx}(\tt{2})&=\tt{2146} & 
g_{xyxyx}(\tt{2})&=\tt{211222212211112122112222}\\
f_{xyxyx}(\tt{3})&=\tt{12356} & 
g_{xyxyx}(\tt{3})&=\tt{1221111211222212112211112222}\\
f_{xyxyx}(\tt{4})&=\tt{21465} & 
g_{xyxyx}(\tt{4})&=\tt{2112222122111121221122221111}\\
f_{xyxyx}(\tt{5})&=\tt{1465} & 
g_{xyxyx}(\tt{5})&=\tt{122111121221122221111}\\
f_{xyxyx}(\tt{6})&=\tt{2356} & 
g_{xyxyx}(\tt{6})&=\tt{211222212112211112222}.
\end{align*}
Note that for $p$ in $\{xyxy, xxyyx, xyxyx\}$, we found the morphisms $f_p$ and $g_p$ by first finding a morphic construction of the run-length encoding $\bm{u}_p$ for the word $\bm{w}_p$.  The word $\bm{w}_p$ can be obtained from its run-length encoding $\bm{u}_p$ by a finite-state transducer.  It is known that the finite-state transduction of a morphic word is morphic~\cite{Dekking1994} (see also~\cite[Section 7.9]{AlloucheShallit2003}), and the proof is constructive, so we found $f_p$ and $g_p$ by following the construction of this proof.

\section{Conclusion}

Our confirmation of Conjecture~\ref{conj} for $k\in\{4,5,\ldots, 21\}$ leaves little doubt (at least in \emph{our} minds) that the conjecture does indeed hold for all $k\geq 4$.  However, confirming the conjecture for all $k\geq 22$ still presents a significant challenge.  In order to confirm the conjecture for infinitely many values of $k$, we will likely need to find a unified construction in order to provide a proof that eliminates the need for brute force searches.

We briefly place Conjecture~\ref{conj} in a broader context within the literature on repetitions in words.  We know that $\RT(k)=k/(k-1)$ for all $k\geq 5,$ we conjecture that $\URT(k)=(k-1)/(k-2)$ for all $k\geq 4$, and Samsonov and Shur~\cite{SamsonovShur2012} conjecture that $\ART(k)=(k-1)/(k-2)$ for all $k\geq 5$.  Let us fix $k\geq 5$.  Consider exponents belonging to the set of ``extended rationals'', which includes all rational numbers and all such numbers with a $+$, where $x^+$ covers $x$, and the inequalities $y\leq x$ and $y<x^+$ are equivalent.  Shur~\cite{Shur2011} proposes splitting all exponents greater than $\RT(k)$ into levels as follows:
\begin{center}
\begin{tabular}{c c c c}
$1$st level & $2$nd level & $3$rd level & $\dots$\\
$\left[\tfrac{k}{k-1}^+,\tfrac{k-1}{k-2}\right]$ & $\left[\tfrac{k-1}{k-2}^+,\tfrac{k-2}{k-3}\right]$ & $\left[\tfrac{k-2}{k-3}^+,\tfrac{k-3}{k-4}\right]$  & $\dots$
\end{tabular}
\end{center}
For $\alpha,\beta\in \left[\tfrac{k}{k-1}^+,\tfrac{k-3}{k-4}\right]$, Shur provides evidence that the language of $\alpha$-free $k$-ary words and the language of $\beta$-free $k$-ary words exhibit similar behaviour (e.g., with respect to growth) if $\alpha$ and $\beta$ are in the same level, and quite different behaviour otherwise; see~\cite{Shur2011,Shur2014}.  If the conjectured values of $\URT(k)$ and $\ART(k)$ are correct, then the undirected repetition threshold and the Abelian repetition threshold provide further evidence of the distinction between levels.

We close with some remarks concerning undirected pattern avoidance.  Let~$\sim$ be an equivalence relation on words.  It is clear that if the pattern $p$ is a factor of the pattern $q$, then any word avoiding $p$ up to $\sim$ must also avoid $q$ up to $\sim$, and it follows that $\lambda_\sim(q)\leq \lambda_\sim(p).$  In the special case that $\sim$ is $=$ or $\approx$, if $q$ encounters $p$ up to $\sim$, then any word avoiding $p$ up to $\sim$ must also avoid $q$ up to $\sim$.  This is not true when $\sim$ is $\simeq$.  For example, the pattern $xyxy$ encounters $xx$ up to $\simeq$, but the word $\tt{012021}$ avoids $xx$ up to $\simeq$ and encounters $xyxy$ up to $\simeq$.  

\begin{question}
If $q$ encounters $	p$ up to $\simeq$, then is $\lambda_{\simeq}(q)\leq \lambda_{\simeq}(p)$?
\end{question}

Finally, we remark that the study of $k$-avoidability of patterns up to $\simeq$ has implications for the $k$-avoidability of \emph{patterns with reversal} (see~\cite{CurrieLafrance2016,CurrieMolRampersad1,CurrieMolRampersad3} for definitions and examples).  In particular, if the pattern $p$ is $k$-avoidable up to $\simeq$, then all patterns with reversal  that are obtained by swapping any number of letters in $p$ with their mirror images are \emph{simultaneously $k$-avoidable}; that is, there is an infinite word on $k$ letters avoiding all such ``decorations'' of $p$.

\section*{Acknowledgements}

We thank the anonymous reviewers, whose comments helped to improve the article.

%\bibliographystyle{amsplaincustom}
%\bibliography{/Users/lgmol/Documents/RT}

\providecommand{\bysame}{\leavevmode\hbox to3em{\hrulefill}\thinspace}
\providecommand{\MR}{\relax\ifhmode\unskip\space\fi MR }
% \MRhref is called by the amsart/book/proc definition of \MR.
\providecommand{\MRhref}[2]{%
  \href{http://www.ams.org/mathscinet-getitem?mr=#1}{#2}
}
\providecommand{\href}[2]{#2}

\end{document}